\numberwithin{equation}{subsection}
\newtheorem{theorem}{Theorem}[section]
\newtheorem{lemma}[theorem]{Lemma}
\newtheorem{proposition}[theorem]{Proposition}
\newtheorem{corollary}[theorem]{Corollary}
\newtheorem{application}[theorem]{Application}
\newtheorem*{theoremprank}{Theorem \ref{Tprankmono}}
\newtheorem*{corboundary}{Corollary \ref{Cchainelliptic}}
\theoremstyle{definition}
\newtheorem{remark}[theorem]{Remark}
\newtheorem{question}[theorem]{Question}
\renewenvironment{proof}{\par\medbreak
 \textit{Proof.}\hskip.5em\ignorespaces}{$\Box$\medbreak}
\newcommand{\invlim}[1]{\lim_{\stackrel{\leftarrow}{#1}}}
\newcommand{\til}[1]{{\widetilde{#1}}}
\def\calm{{\mathcal M}}
\def\caln{{\mathcal N}}
\newcommand{\oneover}[1]{\frac{1}{#1}}
\def\geom{{\rm geom}}
\def\sm{{\rm sm}}
\def\et{{\text{\'et}}}
\def\del{{\partial}}
\def\mmu{{\pmb \mu}}
\newcommand{\st}[1]{\{#1\}}
\def\ra{\rightarrow}
\global\let\hom\undefined
\DeclareMathOperator{\hom}{Hom}
\DeclareMathOperator{\spec}{Spec}
\DeclareMathOperator{\aut}{Aut}
\DeclareMathOperator{\gl}{GL}
\DeclareMathOperator{\pic}{Pic}
\def\sp{{\mathop{\rm Sp}}}
\DeclareMathOperator{\gsp}{GSp}
\DeclareMathOperator{\Isom}{Isom}
\DeclareMathOperator{\End}{End}
\def\std{{\mathop{\rm std}}}
\DeclareMathOperator{\gal}{Gal}
\def\tensor{\otimes}
\def\integ{\mathbb Z}
\def\gp{\mathbb G}
\def\proj{\mathbb P}
\newcommand{\abs}[1]{{\left|#1\right|}}
\newcommand{\rest}[1]{|_{#1}}
\def\rat{\mathbb Q}
\def\ff{\mathbb F}
\def\inject{\hookrightarrow}
\def\cross{\times}
\def\units{^\cross}
\DeclareMathOperator{\id}{id}
\def\iso{\cong}
\newcommand{\ang}[1]{{{\langle #1 \rangle}}}
\def\mono{{\sf M}}
\def\cala{{\mathcal A}}
\def\calc{{\mathcal C}}
\def\calh{{\mathcal H}}
\def\calo{{\mathcal O}}
\def\cals{{\mathcal S}}
\def\calt{{\mathcal T}}
\def\inv{^{-1}}
\renewcommand{\bar}[1]{{\overline{#1}}}
\newcommand{\floor}[1]{{\lfloor #1 \rfloor}}
\newenvironment{alphabetize}{\begin{enumerate}

}{\end{enumerate}}
\begin{document}

%\runningheads{J.\ Achter and R.\ Pries}{Monodromy of $p$-rank strata}

\title{Monodromy of the $p$-rank strata of the moduli space of curves}
\author{Jeffrey D. Achter, Rachel Pries}
\address{Department of Mathematics, Colorado State University, Fort Collins, CO 80523} 

\begin{abstract}
\noindent
We determine the $\integ/\ell$-monodromy and $\integ_\ell$-monodromy of
every irreducible component of the stratum $\calm_g^f$ of curves
of genus $g$ and $p$-rank $f$ in characteristic $p$.  In particular,
we prove that the $\integ/\ell$-monodromy of every component of
$\calm_g^f$ is the symplectic group $\sp_{2g}(\integ/\ell)$ if $g \geq
3$ and if $\ell$ is a prime distinct from $p$.  The method involves
results on the intersection of $\bar\calm_g^f$ with the boundary of
$\bar\calm_g$.  
We give applications to the generic
behavior of automorphism groups, Jacobians, class groups, and zeta
functions of curves of given genus and $p$-rank.  
\end{abstract}

%\subjclass[2000]{11G18; 14D05, 14H40}
\keywords{monodromy, $p$-rank, moduli, Jacobian, curve; {\bf MSC}
  11G18, 11G20, 14D05.}
%\received{}

\maketitle

\section{Introduction}

Suppose $C$ is a smooth connected projective curve of genus $g \geq 1$ over
an algebraically closed field $k$ of characteristic $p>0$.  The
Jacobian $\pic^0(C)$ is a principally polarized abelian variety of
dimension $g$.  The number of $p$-torsion points of $\pic^0(C)$ is $p^f$
for some integer $f$, called the $p$-rank of $C$, with $0 \le f \le g$. 

Let $\calm_g$ be the moduli space over $k$  of smooth connected projective
curves of genus $g$; it is a smooth Deligne-Mumford stack over $k$.
The $p$-rank induces a stratification $\calm_g= \cup \calm_g^f$ by
locally closed reduced substacks $\calm_g^f$,
whose geometric points correspond to curves of genus $g$ and $p$-rank $f$.     

Let $\ell$ be a prime number distinct from $p$.  In this paper, we compute
the $\ell$-adic monodromy of every irreducible component of
$\calm_g^f$.  The main result implies that there is no restriction on
the monodromy group other than that it preserve the symplectic pairing
coming from the principal polarization.  Heuristically, this means
that $p$-rank constraints alone do not force the existence of
extra automorphisms (or other algebraic cycles) on a family of curves.

To describe this result more precisely, let $S$ be a connected stack
over $k$, and let $s$ be a geometric point of $S$.  Let $C \ra S$ be a
relative smooth proper curve of genus $g$ over $S$.  Then
$\pic^0(C)[\ell]$ is an \'etale cover of $S$ with geometric fiber
isomorphic to $(\integ/\ell)^{2g}$.  The fundamental group
$\pi_1(S,s)$ acts linearly on the fiber $\pic^0(C)[\ell]_s$, and the
monodromy group $\mono_\ell(C \ra S, s)$ is the image of $\pi_1(S,s)$
in $\aut(\pic^0(C)[\ell]_s)$.  For the main result we determine
$\mono_\ell(S):=\mono_\ell(C \to S,s)$, where $S$ is an irreducible
component of $\calm_g^f$ and $C \ra S$ is the tautological curve.
This also determines the $\ell$-adic monodromy group $\mono_{\integ_\ell}(S)$.

\begin{theoremprank}
Let $\ell$ be a prime distinct from $p$ and suppose $g \ge 1$.
Suppose $0 \leq f \leq g$, and $f \not = 0$ if $g \leq 2$. 
Let $S$ be an irreducible component of $\calm_g^f$, the $p$-rank $f$ 
stratum in $\calm_g$. 
Then $\mono_\ell(S)\iso \sp_{2g}(\integ/\ell)$ and
$\mono_{\integ_\ell}(S) \iso \sp_{2g}(\integ_\ell)$.
\end{theoremprank}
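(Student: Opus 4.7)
The plan is to induct on $g$, using the geometry of $\bar{\calm}_g^f\cap\del\bar{\calm}_g$ as the main tool. The Weil pairing from the principal polarization on $\pic^0(C)[\ell]$ is Galois-equivariant, so the inclusion $\mono_\ell(S)\subseteq\sp_{2g}(\integ/\ell)$ is automatic; the real task is the reverse inclusion, from which the $\integ_\ell$-statement will follow by a standard closed-subgroup lifting argument.

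For the base cases: the $g=1$ case is classical, since $\calm_1^1$ has $\ell$-adic monodromy $\sp_2(\integ/\ell)=\mathop{\rm SL}_2(\integ/\ell)$. For $g=2$ with $f\in\{1,2\}$, one may argue via the hyperelliptic parametrization of $\calm_2$ together with explicit monodromy of hyperelliptic families, or by appealing to the known $\ell$-adic monodromy of the $p$-rank strata of $\cala_2$.

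For the inductive step at $g\geq 3$, assume the theorem for smaller genera. The key geometric input (presumably established earlier in the paper, as the abstract indicates) is that every irreducible component $S$ of $\calm_g^f$ degenerates to a boundary stratum of a prescribed form. When $f\geq 1$, one attaches an ordinary elliptic tail and specializes into the image of the clutching map $\calm_{g-1,1}^{f-1}\times\calm_{1,1}^1\to\Delta_1$; the Tate module splits accordingly, and the inductive hypothesis applied to the two factors produces a Levi subgroup of shape $\sp_{2(g-1)}(\integ/\ell)\times\mathop{\rm SL}_2(\integ/\ell)$ inside $\mono_\ell(S)$. Specializing to a second transversal boundary stratum (or invoking a vanishing-cycle transvection at a neighboring $\Delta_0$-component) then contributes a transvection across the symplectic splitting, and a standard group-theoretic lemma shows that any subgroup of $\sp_{2g}(\integ/\ell)$ containing such a Levi together with one transvection across the splitting must be all of $\sp_{2g}(\integ/\ell)$. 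The case $f=0$ with $g\geq 3$ is more delicate, since no ordinary elliptic tail is available: here I would degenerate through $\Delta_0$ to an irreducible nodal curve of geometric genus $g-1$ and $p$-rank $0$, or through a compact-type boundary splitting into two lower-genus pieces of $p$-rank zero, bootstrapping from the $f\geq 1$ cases just established.

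The $\integ_\ell$-step is essentially formal: $\mono_{\integ_\ell}(S)$ is a closed subgroup of $\sp_{2g}(\integ_\ell)$ that surjects onto $\sp_{2g}(\integ/\ell)$, and a Serre/Nori-type lemma (using vanishing of $H^1(\sp_{2g}(\ff_\ell),\mathfrak{sp}_{2g})$ for $\ell$ not too small, with small $\ell$ handled by hand) forces equality. The main obstacle I expect is the boundary geometry itself: proving that \emph{every} irreducible component of $\calm_g^f$ actually intersects $\del\bar{\calm}_g$ in a component with the expected dimension and $p$-rank profile so that the inductive hypothesis can be invoked. The $f=0$ stratum is tightest, since the supersingular locus of $\calm_{1,1}$ is zero-dimensional and one cannot freely vary an elliptic tail, so the irreducibility and reachability of the relevant $p$-rank-zero boundary components becomes the real technical core of the argument.
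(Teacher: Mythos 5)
Your proposal shares the paper's high-level strategy---degeneration into $\del\bar\calm_g$ combined with induction on $g$---and correctly identifies the boundary geometry of $\bar\calm_g^f$ as the technical core. However, there are two concrete gaps, and the paper's actual mechanism is genuinely different.

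First, the $f=0$ fallback you describe does not exist. Identifying two points of a smooth curve adds a one-dimensional toric part to the Jacobian, so \emph{every} point of $\Delta_0$ has $p$-rank at least $1$; the paper proves (Lemma \ref{LinterDelta0}(a)) that $\bar S$ meets $\Delta_0$ if and only if $f \geq 1$. There is therefore no nodal degeneration of geometric genus $g-1$ and $p$-rank $0$ to use, and --- as the paper explicitly notes --- Hall's vanishing-cycle/transvection criterion requires a fiber with nontrivial toric part and is intrinsically unavailable for $\calm_g^0$. Your alternative, a compact-type splitting into two pieces of $p$-rank zero, also cannot be made to bootstrap: with only $\sp_{2g_1}\cross\sp_{2g_2}$ inside $\sp_{2g}$ and no transvection to add, the group theory does not close, and for small $g$ one of the two factors is a genus-$\le 2$, $p$-rank-$0$ stratum, which is supersingular with finite monodromy.

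Second, the induction cannot start at $g=3$ with base cases $g\le 2$. For $g=3$ and $f=1$, every $\Delta_1$-factorization $(f_1,f_2)$ with $f_1+f_2=1$ across genera $(1,2)$ has a factor with genus $\le 2$ and $p$-rank $0$, which is supersingular, so one Levi factor degenerates to a finite group and the argument stalls. The paper sidesteps both obstructions by making $g \le 3$ the base case and invoking Chai's theorem on the integral monodromy of Hecke-stable subvarieties of $\cala_g$ (Lemma \ref{lembasecase}), which applies because $\dim\calm_g^f = \dim\cala_g^f$ exactly when $g\le 3$. The induction then starts at $g \ge 4$ and degenerates not to $\Delta_1$ but to $\Delta_{1,1}$, a chain of two elliptic tails around a genus-$(g-2)$ middle piece. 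This yields \emph{two overlapping} Levi subgroups $\sp(V_1\oplus V_2)\cross\sp(V_3)$ and $\sp(V_1)\cross\sp(V_2\oplus V_3)$ inside $\mono_\ell(S)$, and King's theorem on maximal subgroups of $\sp_{2g}$ then forces $\mono_\ell(S)=\sp(V)$. This Levi-plus-Levi closure, rather than Levi-plus-transvection, is precisely what makes the $p$-rank-$0$ case work, since the individual elliptic tails can have trivial monodromy without harming the argument.
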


We also prove an analogous result about $p$-adic monodromy
(Proposition \ref{proppadic}).

We give four applications of Theorem \ref{Tprankmono} in Section \ref{secapp}.
The first two do not use the full strength of the theorem, 
in that they can be deduced solely from knowledge of the $\rat_\ell$-monodromy.
Application (i) complements \cite[Thm.\ 1]{poonennoextra1}
(and recovers \cite[Thm.\ 1.1(i)]{achterglasspries}), while 
application (ii) complements results in \cite[Thm.\ 1]{HZhu}.
Applications (iii) and (iv) build upon \cite[9.7.13]{katzsarnak} and \cite[6.1]{kowalskisieve} respectively. 

\paragraph{Applications:} 
Let $\ff$ be a finite field of characteristic $p$.  Under the
hypotheses of Theorem \ref{Tprankmono}:
\begin{description}
\item{(i)} 
there is an $\overline{\ff}$-curve $C$ of genus $g$ and $p$-rank $f$ with $\aut_{\bar\ff}(C) = \st{\id}$ (\ref{apptrivaut});
\item{(ii)} 
there is an $\overline{\ff}$-curve $C$ of genus $g$ and $p$-rank $f$ whose Jacobian is absolutely simple (\ref{appabssimp});
\item{(iii)}  
if $\abs{\ff}\equiv 1 \bmod \ell$, about $\ell/(\ell^2-1)$ of the $\ff$-curves of genus $g$ and $p$-rank $f$ 
have a point of order $\ell$ on their Jacobian (\ref{appclass});
\item{(iv)} 
for most $\ff$-curves $C$ of genus $g$ and $p$-rank $f$, the
  splitting field of the numerator of the zeta function of $C$ has degree $2^g g!$ over $\rat$ (\ref{appzeta}).
\end{description}

At its heart, this paper relies on fundamental work of Chai and Oort.
The proof of Theorem \ref{Tprankmono} appears in Section
\ref{Smonoresults}.  It proceeds by degeneration (as in
\cite{ekedahlmono}) and induction on the genus.  Consider
the moduli space $\cala_g$ of principally polarized abelian varieties of
dimension $g$ and its $p$-rank strata $\cala_g^f$.  Recent work in
\cite{chailadic} gives information about the integral monodromy of
$\cala_g^f$.  In particular, an irreducible subspace of
$\cala_g$ which is stable under all Hecke correspondences and whose
generic point is not supersingular has monodromy group
$\sp_{2g}(\integ/\ell)$.  The base cases of Theorem \ref{Tprankmono}
rely on the fact that the dimensions of $\calm_g^f$ and $\cala_g^f$ are
equal if $g \leq 3$.

We note that \cite{chailadic} is not directly applicable to the strata
$\calm_g^f$ when $g \geq 4$.  When $g \geq 4$, the Torelli locus is
very far from being Hecke-stable.  Another method for computing
monodromy groups is found in \cite{hall06}, where the author shows
that certain group-theoretic conditions on the local inertia structure
of a $\integ/\ell$-sheaf guarantee that its global monodromy group is
the full symplectic group.  The method of \cite{hall06}
applies only to families of curves in which the Jacobian of at least
one degenerate fiber has a nontrivial toric part, and thus does not apply to $\calm_g^0$.

The inductive step of Theorem \ref{Tprankmono}
uses results about the boundary of $\calm_g^f$
found in Section \ref{Sboundresults}.  In particular, it employs a new
result that the closure of every component $S$ of $\calm_g^f$ in $\bar
\calm_g$ contains moduli points of chains of curves of specified
genera and $p$-rank (Proposition \ref{propchain}), and in particular
intersects the boundary component $\Delta_{1,1}$ in a certain way
(Corollary \ref{cordegen}).  As in \cite{achterpries07}, this
implies that the monodromy group of $S$ contains two non-identical
copies of $\sp_{2g-2}(\integ/\ell)$, and is thus isomorphic to
$\sp_{2g}(\integ/\ell)$.
  
A result of independent interest in Section \ref{Sboundresults} is the following.  

\begin{corboundary}
Suppose $g \ge 2$ and $ 0 \le f \le g$.  Let $\Omega \subset \st{1,
\ldots, g}$ be a subset of cardinality $f$.  Let $S$ be an irreducible
component of $\calm_g^f$.  Then $\bar S$ contains the moduli point of a
chain of elliptic curves $E_1$, $\ldots$, $E_g$, where $E_j$ is
ordinary if and only if $j \in \Omega$.
\end{corboundary}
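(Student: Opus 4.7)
The plan is to obtain this statement as a direct specialization of Proposition \ref{propchain}. I would apply the proposition with the sequence of component genera chosen to be $(1, 1, \ldots, 1)$ of length $g$, and with the $j$-th component assigned $p$-rank $1$ precisely when $j \in \Omega$ and $p$-rank $0$ otherwise.

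To see that this application is legitimate, I would verify that such a chain has total (arithmetic) genus $g$ and total $p$-rank $f$. The dual graph of a chain of $g$ curves is a tree, so its first Betti number vanishes, and the arithmetic genus of the chain is simply the sum of the component genera, namely $g$. The $p$-rank of a curve of compact type is additive over components (its generalized Jacobian is a product of the Jacobians of the components, with no toric contribution since the dual graph is a tree), so the total $p$-rank is $\sum_{j \in \Omega} 1 + \sum_{j \notin \Omega} 0 = |\Omega| = f$. Thus the numerical hypotheses of Proposition \ref{propchain} are satisfied.

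The remaining translation is the standard fact that an elliptic curve has $p$-rank $1$ exactly when it is ordinary and $p$-rank $0$ exactly when it is supersingular, so prescribing a $p$-rank for each component is equivalent to prescribing the ordinary/supersingular pattern encoded by $\Omega$. Proposition \ref{propchain} then produces a chain of elliptic curves $E_1, \ldots, E_g$ whose moduli point lies in $\bar S$ and which realizes the prescribed pattern. There is no serious obstacle internal to this corollary; all of the geometric content---controlling exactly which chain-of-curves degenerations appear in the closure of a given component of $\calm_g^f$---is absorbed into Proposition \ref{propchain}, and the purpose of the corollary is to record the all-elliptic case that will feed the degeneration step of the monodromy induction.
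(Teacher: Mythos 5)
Your proof is correct and is exactly what the paper intends: the paper's own proof simply says the corollary ``follows immediately from Proposition \ref{propchain},'' and your choice of parameters ($g_i = 1$ for all $i$, $f_i = 1$ iff $i \in \Omega$) is the intended specialization. You have merely made explicit the bookkeeping that the paper leaves implicit.
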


In Section \ref{Squestions}, we include some open questions about the
geometry of the $p$-rank strata of curves.  For example, the number of
irreducible components of $\calm_g^f$ is known only in special cases.
Finally, we anticipate that the techniques of this paper can be used
to compute the $\ell$-adic monodromy of components of the $p$-rank
strata $\calh_g^f$ of the moduli space $\calh_g$ of hyperelliptic
curves of genus $g$ as well.

We thank the referee for helpful comments.

\section{Background} \label{S2}

Let $k$ be an algebraically closed field of characteristic $p>0$.  In
Sections \ref{S2}, \ref{Sboundresults}, and \ref{Smono} all objects
are defined on the category of $k$-schemes, and $T$ is an arbitrary
$k$-scheme.  Let $\ell$ be a prime distinct from $p$.  We fix an
isomorphism $\mmu_\ell \simeq \integ/\ell$.

\subsection{Moduli spaces}
\label{subsecmoduli}

For each $g \ge 1$ consider the following well-known categories, each of which is fibered
in groupoids over the category of $k$-schemes in its \'etale topology:
\begin{description}
\item{$\cala_g$} principally polarized abelian
  schemes of dimension $g$;
\item{$\calm_g$} smooth connected proper relative
  curves of genus $g$;
\item{$\bar\calm_g$} stable relative curves of genus $g$.
\end{description}
For each positive integer $r$, there is also (see \cite[Def.\ 1.1,1.2]{knudsen2}) the category
\begin{description}
\item{$\bar\calm_{g;r}$} $r$-labeled stable relative curves $(C; P_1,
\ldots, P_r)$ of genus $g$.
\end{description}
These are all smooth Deligne-Mumford stacks, and $\bar\calm_g$ and
$\bar\calm_{g;r}$ are proper \cite[Thm.\ 2.7]{knudsen2}.  
There is a forgetful functor $\phi_{g;r}:
\bar\calm_{g;r} \ra \bar\calm_g$.  Let $\bar \calm_{g,0} =
\bar\calm_g$.  Let $\calm_{g;r} =
\bar\calm_{g;r}\cross_{\bar\calm_g} \calm_g$ be the moduli stack of
$r$-labeled smooth curves of genus $g$.  The boundaries of
$\bar\calm_g$ and $\bar\calm_{g;r}$ are $\del\bar\calm_g = \bar\calm_g
- \calm_g$ and $\del\bar\calm_{g;r} = \bar\calm_{g;r} - \calm_{g;r}$,
respectively.   If $S\subset \bar\calm_g$, let $\bar S$ be the closure of $S$ in
$\bar\calm_g$.

For a $k$-scheme $T$, $\calm_g(T) = {\rm Mor}_k(T,\calm_g)$ is the 
category of smooth proper relative curves of genus $g$ over $T$.
There is a tautological curve $\calc_g$ over the moduli stack
$\calm_g$ \cite[Sec. 5]{delignemumford}.
If $s \in \calm_g(k)$, let $\calc_{g,s}$ denote the fiber of $\calc_g$ over $s$, which 
is the curve corresponding to the point $s: \spec k \ra \calm_g$.  
Similar conventions are employed for the tautological marked curve $\calc_{g;r}$ over $\bar \calm_{g;r}$.

Let $C/k$ be a stable curve.  The Picard variety $\pic^0(C)$ is an
abelian variety if each irreducible component of $C$ is smooth and if
the intersection graph of the irreducible components of $C$ is a tree.
Such a curve is said to be of compact type.  Curves which are not
of compact type correspond to points of a component $\Delta_0$ (defined in
Section \ref{Sclutch}) of $\del\bar\calm_g$.

\subsection{The $p$-rank}

Let $X$ be a principally polarized abelian variety of dimension $g$
over an algebraically closed field $k'$ of characteristic $p$.  The
$p$-rank of $X$ is the integer $f$ such that $X[p](k') \iso
(\integ/p)^f$. It may be computed as $f(X) =
\dim_{\ff_p}\hom(\mmu_p,X)$, where $\mmu_p$ is the kernel of Frobenius
on the multiplicative group $\gp_m$.  It is well-known that $0 \leq f
\leq g$.  This definition extends to semiabelian varieties; if $X/k'$
is a semiabelian variety, its $p$-rank is
$\dim_{\ff_p}\hom(\mmu_p,X)$.  If $X$ is an extension of an abelian
variety $Y$ by a torus $W$, then $f(X) = f(Y) + \dim(W)$.  If $X/k_0$
is a semiabelian variety over an arbitrary field of characteristic
$p$, its $p$-rank is that of $X_{k'}$ for any algebraically closed
field $k'$ containing $k_0$.
If $C/k'$ is a stable curve, then its $p$-rank $f(C)$ is that of $\pic^0(C)$.

\begin{lemma}
  Let $X \ra S$ be a semiabelian scheme of relative dimension $g$ over
  a Deligne-Mumford stack, and suppose $0 \le f \le g$.  There is a
  locally closed reduced substack $S^f$ of $S$ such that for each
  field $k'\supset k$ and point $s \in S(k')$, then $s \in S^f(k')$ if
  and only if the $p$-rank of $X_s$ is $f$.
\end{lemma}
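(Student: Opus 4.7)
My plan is to exhibit $S^f$ as a difference of two open substacks. Specifically, I will argue that the function $\phi\colon |S| \to \integ$ sending $s \mapsto f(X_s)$ is lower semicontinuous, so that
$$
S^{\ge f} := \st{s \in |S| : \phi(s) \ge f}
$$
is open in $|S|$ for every $f$. Since $S^{\ge f+1}$ is then an open substack of $S^{\ge f}$, the complement $S^f := S^{\ge f} \setminus S^{\ge f+1}$ is closed in $S^{\ge f}$ and hence locally closed in $S$; endowing it with its reduced substack structure yields the required $S^f$, whose geometric points are exactly the $s$ with $f(X_s) = f$.

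To establish the semicontinuity, I would first reduce to the case that $X \to S$ is an abelian scheme. The toric rank $r(s) := \dim W_s$ of the maximal subtorus $W_s \subset X_s$ is upper semicontinuous on $|S|$, so the locus on which $r(s) = r$ is locally closed in $S$. After passing to an étale cover trivializing the torus, $X$ restricted to such a stratum fits in an exact sequence $0 \to W \to X \to Y \to 0$ with $W$ a torus of constant rank $r$ and $Y \to S$ an abelian scheme of relative dimension $g-r$. The additivity $f(X_s) = f(Y_s) + r$ recalled in the paragraph preceding the lemma then reduces lower semicontinuity of $f(X_s)$ to that of $f(Y_s)$.

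For an abelian scheme $Y \to S$ of relative dimension $g'$, the kernel $Y[p]$ is a finite locally free commutative $S$-group scheme of constant rank $p^{2g'}$, and $f(Y_{\bar s}) = \log_p \# Y[p](\bar s)$. Lower semicontinuity of $\bar s \mapsto \# G(\bar s)$ for a finite locally free $S$-group scheme $G$ is standard: the rank of the identity component $G^0_{\bar s}$ is upper semicontinuous, since connected components only fuse under specialization, so the rank of the maximal étale quotient $G^{\et}_{\bar s} = G_{\bar s}/G^0_{\bar s}$ is lower semicontinuous. Applied to $G = Y[p]$ this gives the claim for abelian schemes, and combined with the reduction above, for semiabelian schemes.

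The main technical obstacle is handling these constructions over a Deligne--Mumford stack rather than a scheme, and ensuring that the toric-rank stratification and the abelian/torus extension, both produced only étale-locally, glue to a well-defined substack. This is not serious: openness and the locally closed reduced structure may be checked on a smooth atlas $T \to S$, and the substacks $T^f$ so constructed are by their intrinsic description stable under the equivalence relation cutting out $S$, hence descend to the desired $S^f$.
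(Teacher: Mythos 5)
Your overall architecture --- reduce to an affine scheme, stratify by toric rank, pass to the abelian quotient on each stratum, and conclude from the abelian case --- parallels the paper's proof. Where the paper simply cites \cite[Thm.\ 3.2.1]{katzsf} for the abelian-scheme case, you give a self-contained argument via the connected--\'etale sequence of the finite locally free group scheme $Y[p]$. That argument is valid: over a strictly Henselian local base, the sequence $0 \to Y[p]^0 \to Y[p] \to Y[p]^\et \to 0$ exists with $Y[p]^\et$ finite \'etale of constant rank; this rank equals $\# Y[p](\bar s)$ at the closed point and is a lower bound for $\# Y[p](\bar \eta)$ at any generization, which is precisely the lower semicontinuity you need.

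The gap is in the passage from stratum-wise to global semicontinuity. You assert that $\phi(s) = f(X_s)$ is lower semicontinuous on all of $|S|$, and ``establish'' this by noting that on each locus of constant toric rank $r$, $\phi = r + f(Y_\cdot)$ with $Y$ abelian. But that only controls specializations that stay within a single toric-rank stratum. For a specialization $\eta \rightsquigarrow s$ in which the toric rank jumps from $r_\eta$ to $r_s > r_\eta$, there is no single abelian quotient $Y$ over a neighborhood of $s$ computing $\phi$ at both points, and the identity $f(X) = f(Y) + r$ gives no a priori comparison between $f(X_\eta)$ and $f(X_s)$: one must rule out that the gain in toric rank exceeds the drop in $p$-rank of the abelian part, which would make $\phi$ jump up. That this cannot happen is true, but it is an additional fact needing an argument you do not supply --- for instance, over a complete local base one may lift the maximal multiplicative subgroup $\mmu_{p^\infty}^{f(X_s)} \inject X_s$ of the special fiber and restrict the lift to the generic fiber, giving $f(X_\eta) \ge f(X_s)$; alternatively one can use the Raynaud extension of $X$. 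Without such input, you have only shown that $S^{\ge f}$ meets each toric-rank stratum in a relatively open subset, so $S^{\ge f}$ is constructible but not visibly open, and the claimed presentation of $S^f$ as a difference of two open substacks does not follow. (For what it is worth, the paper's proof elides the very same point behind the assertion that ``a finite union of locally closed subschemes is again locally closed,'' which is false in general.)
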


\begin{proof}
  A substack of $S$ is reduced and locally closed if it is locally representable
  by reduced locally closed subschemes \cite[p.\ 100]{delignemumford}
  \cite[3.9 and 3.14]{lmbstacks}.  Therefore, it suffices to consider
  the case that $S$ is an affine scheme.  Write $X$ as an extension $0
  \ra W \ra X \ra Y \ra 0$, where $Y$ is an abelian scheme and $W$ is
  a torus.  Since $\dim(W)$ is an upper semicontinuous function on the
  base \cite[p.\ 8]{faltingschai}, there is a finite stratification of
  $S$ by locally closed subschemes on which $\dim(W)$ is constant.
  Since a finite union of locally closed subschemes is again locally
  closed, one may assume that $\dim(W)$ is constant.  Finally, since
  $f(Y) = f(X)+\dim(W)$, it suffices to prove the result for the
  abelian scheme $Y$.  The existence of $S^f$ then follows immediately
  from \cite[Thm.\ 3.2.1]{katzsf}.
\end{proof}

In particular, $\cala_g^f$ and  $\calm_g^f$ denote the
locally closed reduced substacks of $\cala_g$ and $\calm_g$,
respectively, 
whose geometric points correspond to objects with $p$-rank $f$.  
Similary, $\bar\calm_g^f:=(\bar \calm_g)^f$ and $\bar \calm_{g;r}^f:=(\bar \calm_{g:r})^f$.
Note that $\bar \calm_g^f$ may be strictly contained in $\bar{\calm_g^f}$ since the 
latter may contain points $s$ such that $f(\calc_{g,s})<f$.

Every component of $\bar\calm_g^f$ has dimension $2g-3+f$
\cite[Thm.\ 2.3]{FVdG:complete}.
Since $\bar \calm_{g;r}^f$ is the fibre of
$\phi_{g,r}$ over $\bar \calm_{g;r}^f$, it is pure of dimension $2g-3+f+r$.

\subsection{Clutching maps} \label{Sclutch}

If $g_1, g_2, r_1, r_2$ are positive integers, there is 
a clutching map

\[
\xymatrix{
\kappa_{g_1;r_1, g_2;r_2}:\bar\calm_{g_1;r_1}\cross\bar\calm_{g_2;r_2} \ar[r] &\bar\calm_{g_1+g_2;r_1+r_2-2}.
}
\]
Suppose $s_1 \in \bar\calm_{g_1;r_1}(T)$ is the moduli point of the
labeled curve $(C_1;P_1, \ldots, P_r)$, and suppose $s_2 \in
\bar\calm_{g_2;r_2}(T)$ is the moduli point of $(C_2;Q_1, \ldots,
Q_{r_2})$.  Then $\kappa_{g_1;r_1,g_2;r_2}(s_1,s_2)$ is the moduli
point of the labeled $T$-curve $(D; P_1, \ldots, P_{r_1-1},
Q_2, \ldots Q_{r_2})$, where the underlying curve $D$ has 
components 
$C_1$ and $C_2$, the sections $P_{r_1}$ and $Q_1$ are identified in an
ordinary double point, and this nodal section is dropped from the
labeling.  The clutching map is a closed immersion if $g_1\not = g_2$
or if $r_1+r_2
\ge 3$, and is always a finite, unramified map \cite[Cor.\ 3.9]{knudsen2}.

By \cite[Ex.\ 9.2.8]{blr},
\begin{align}
\label{eqblr}
\pic^0(D) &\iso \pic^0(C_1) \cross \pic^0(C_2).
\intertext{Then the $p$-rank of $E$ is}
\label{eqblrprank}
f(D) &= f(C_1)+f(C_2).
\end{align}

Similarly, if $g$ is a positive integer and if $r \ge 2$, there is a
map
\[
\xymatrix{
\kappa_{g;r}:\bar\calm_{g;r} \ar[r] &\bar\calm_{g+1;r-2}.
}
\]
If $s \in \bar \calm_{g;r}(T)$ is the moduli point of the labeled curve 
$(C; P_1, \ldots, P_r)$ then $\kappa_{g;r}(s)$ is the moduli point of the
labeled curve $(E; P_1, \ldots, P_{r-2})$ where $E$ is obtained by identifying 
the sections $P_{r-1}$ and $P_r$ in an ordinary double point, and these 
sections are subsequently dropped from the labeling. 
Again, the 
morphism $\kappa_{g;r}$ is finite and unramified \cite[Cor.\
3.9]{knudsen2}.

By \cite[Ex.\ 9.2.8]{blr}, $\pic^0(E)$ is an extension
\begin{equation}
\label{eqblr0}
\xymatrix{
0 \ar[r] & W \ar[r] & \pic^0(E)  \ar[r] & \pic^0(C) \ar[r] & 0},
\end{equation}
where $W$ is a one-dimensional torus.  
In particular, the toric rank of $\pic^0(E)$ is one greater than that of $\pic^0(C)$, 
and their maximal projective quotients are isomorphic, so that 
\begin{equation}
\label{eqblrprank0}
f(E) = f(C)+1.
\end{equation}
  
For $1 \le i \le g-1$, let $\Delta_i = \Delta_{i}[\bar\calm_g]$ be the
image of $\kappa_{i,1;g-i,1}$.  Note that $\Delta_i$ and $\Delta_{g-i}$ are the
same substack of $\bar\calm_g$.
Let $\Delta_0 = \Delta_0[\bar\calm_g]$ be the image
of $\bar \calm_{g-1;2}$ under $\kappa_{g-1;2}$. Each $\Delta_i$ is an
irreducible divisor in $\bar\calm_g$, and $\del\bar\calm_g$ is the union
of the $\Delta_i$ for $0 \le i \le \floor{g/2}$
 (e.g., \cite[p.190]{knudsen2}).  If $S$
is a stack equipped with a map $S \ra \bar\calm_g$, let
$\Delta_i[S]$ denote $S\cross_{\bar\calm_g} \Delta_i[\bar\calm_g]$. 
Also define $\Delta_i[\bar\calm_g]^f:=(\Delta_i[\bar\calm_g])^f$.

If $g\ge 3$, then there exists a commutative diagram of clutching maps

\begin{equation}
\label{diagclutchbox}
\xymatrix{
\bar\calm_{1,1} \cross \bar \calm_{g-2,2} \cross \bar \calm_{1,1} 
\ar[d] \ar[r] \ar[dr]^{\kappa_{1,g-2,1}}
 & \bar\calm_{g-1,1} \cross \bar \calm_{1,1} \ar[d]
\\
\bar\calm_{1,1}\cross \bar\calm_{g-1,1} \ar[r] & \bar\calm_g.
}
\end{equation}

Let $\Delta_{1,1} = \Delta_{1,1}[\bar\calm_g]$ 
denote the image in $\bar\calm_g$ of the upper left-hand object;
it is the (reduced) self-intersection locus of $\Delta_1$.
There is an open, dense
substack $U_{1,1} \subset \Delta_{1,1}$ such that if $s \in U_{1,1}(k)$,
then $\calc_{g,s}$ is a chain of three irreducible smooth curves $Y_1$, $Y_2$, $Y_3$
with $g_{Y_1}=g_{Y_3}=1$ and $g_{Y_2}=g-2$.   
Also, for $i \in \st{1,3}$, the curves $Y_i$ and $Y_2$ intersect in a
point $P_i$ which is an ordinary double point.

\section{The $p$-rank strata of curves}
\label{Sboundresults} 

\subsection{Boundary of the $p$-rank strata of curves}
 
The $p$-rank strata of the boundary of $\bar \calm_g$ are easy to
describe using the clutching maps.  First, if $f \geq 1$, then
$\Delta_0[\bar \calm_g]^f$ is the image of $\bar \calm_{g-1;2}^{f-1}$
under $\kappa_{g-1;2}$ by \eqref{eqblrprank0}.  Second, if $1 \leq i \leq
 g-1$ and $0 \leq f \leq g$, then \eqref{eqblrprank}
implies that $\Delta_i[\bar \calm_g]^f$ is the union of the images of
$\bar\calm_{i;1}^{f_1} \cross \bar \calm_{g-i;1}^{f_2}$ under
$\kappa_{i;1, g-i;1}$ as $(f_1,f_2)$ ranges over all pairs such that
\begin{equation}
\label{f1f2conditions}
0 \leq f_1 \leq i,\ 0 \leq f_2 \leq g-i \text{ and } f_1+f_2=f.
\end{equation}
 
\begin{lemma}
\label{lemdimw}  Suppose $g \geq 2$ and $0 \le f \le g$.
If $0 \le i \le g-1$  and $(f,i) \not =(0,0)$, then every component 
of $\Delta_i[\bar\calm_g]^f$ has dimension $2g+f-4$.
\end{lemma}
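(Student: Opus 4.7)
The plan is to split into the two cases $i=0$ and $1 \le i \le g-1$, and in each case read off the dimension from the description of $\Delta_i[\bar\calm_g]^f$ as a union of images under clutching maps, combined with the dimension formula $\dim \bar\calm_{g;r}^f = 2g-3+f+r$ recorded at the end of Section 2.2 (which is FVdG plus the fact that $\phi_{g;r}$ is flat of relative dimension $r$). The key input is that the clutching maps $\kappa_{g-1;2}$ and $\kappa_{i;1,g-i;1}$ are finite and unramified, so images have the same dimension as their sources.

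For the case $i=0$, the hypothesis $(f,i)\neq(0,0)$ forces $f\ge 1$. According to the description in the first paragraph of Section 3.1 (an immediate consequence of \eqref{eqblrprank0}), $\Delta_0[\bar\calm_g]^f$ equals the image of $\bar\calm_{g-1;2}^{f-1}$ under $\kappa_{g-1;2}$. Every component of the source has dimension $2(g-1)-3+(f-1)+2 = 2g+f-4$, and the map is finite, so every component of the image has the same dimension.

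For the case $1 \le i \le g-1$, the description from \eqref{eqblrprank} exhibits $\Delta_i[\bar\calm_g]^f$ as the union, over pairs $(f_1,f_2)$ satisfying \eqref{f1f2conditions}, of the images of $\bar\calm_{i;1}^{f_1}\times\bar\calm_{g-i;1}^{f_2}$ under $\kappa_{i;1,g-i;1}$. Each factor is pure of dimension $2i-3+f_1+1$ and $2(g-i)-3+f_2+1$ respectively, so the product has pure dimension $2g-4+f_1+f_2 = 2g+f-4$, and finiteness of the clutching map transfers this to the image. Taking the union over all admissible $(f_1,f_2)$ does not change the common dimension of components, so every component of $\Delta_i[\bar\calm_g]^f$ has dimension $2g+f-4$.

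There is no real obstacle here; the only minor point to verify is that at least one pair $(f_1,f_2)$ with $f_1+f_2=f$ actually yields a nonempty stratum (else the statement is vacuous). This follows from the standard fact that for any $g'\ge 1$ and $0\le f'\le g'$, the space $\bar\calm_{g'}^{f'}$ is nonempty (indeed FVdG gives its dimension), hence so is $\bar\calm_{g';r}^{f'}$ via the forgetful map $\phi_{g';r}$. With this, the combinatorial bookkeeping of $(f_1,f_2)$ presents no difficulty, and the lemma follows.
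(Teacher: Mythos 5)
Your proof is correct and follows essentially the same route as the paper's: both decompose $\Delta_i[\bar\calm_g]^f$ via the clutching maps into images of products of labeled $p$-rank strata, compute dimensions from the formula $\dim\bar\calm_{g;r}^f = 2g-3+f+r$, and transfer dimensions along the finite clutching morphisms. Your additional remark about nonemptiness is a reasonable bookkeeping observation that the paper leaves implicit.
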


\begin{proof}
Suppose $1 \le f \le g$.  Then $\bar\calm_{g-1;2}^{f-1}$
is pure of dimension $\dim(\bar \calm_{g-1}^{f-1})+2 =2g+f-4$.  Since
$\kappa_{g-1;2}$ is finite, $\Delta_0[\bar\calm_g]^f$ is pure of
dimension $2g+f-4$ as well. 

Similarly, suppose $0 \le f \le g$ and $1 \le i \le g-1$.  Let
$(f_1,f_2)$ be any pair of integers satisfying \eqref{f1f2conditions}.
Then $\bar\calm_{i;1}^{f_1} \cross \bar \calm_{g-i;1}^{f_2}$ is pure of dimension
$\dim(\bar \calm_{i;1}^{f_1}) + \dim(\bar \calm_{g-i;1}^{f_2}) = 2g+f -4$.  Since
$\kappa_{i;1,g-i;1}$ is finite, $\Delta_i[\bar\calm_g]^f$ is pure of dimension $2g+f-4$ as well.
\end{proof}

The first part of the next lemma shows that 
if $\eta$ is a generic point of $\bar\calm_g^f$, then the
curve $\calc_{g,\eta}$ is smooth. 
Thus no component of $\bar\calm_g^f$ is
contained in the boundary $\del\bar\calm_g$.   The last part shows
that one can adjust the labeling of an $r$-labeled curve of genus $g$ and
$p$-rank $f$ without leaving the irreducible component of
$\bar\calm_{g;r}^f$ to which its moduli point belongs.

\begin{lemma}
\label{lemlabeled}
Suppose $g \ge 1$, $0 \le f \le g$, and $r \ge 1$.
\begin{alphabetize}
\item Then $\calm_g^f$ is open and dense in $\bar\calm_g^f$.
\item Then $\calm_{g;r}^f$ is open and dense in $\bar\calm_{g;r}^f$.
\item Let $S$ be an irreducible component of $\bar\calm_{g;r}^f$.
Then $S = \phi_{g;r}\inv(\phi_{g;r}(S))$.  Equivalently, if $T$ is a
$k$-scheme, if $(C; P_1, \ldots, P_r) \in S(T)$, and if $(Q_1, \ldots, Q_r)$ is
any other labeling of $C$, then $(C; Q_1, \ldots, Q_r) \in S(T)$.
\end{alphabetize}
\end{lemma}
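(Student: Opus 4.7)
The plan is to treat the three parts in sequence, reducing (b) and (c) to (a) by means of the forgetful map $\phi_{g;r}:\bar\calm_{g;r}\ra\bar\calm_g$. For (a), I would write
\[
\bar\calm_g^f\setminus\calm_g^f \;=\; \bar\calm_g^f\cap\del\bar\calm_g \;=\; \bigcup_{i=0}^{\floor{g/2}}\Delta_i[\bar\calm_g]^f.
\]
Each summand with $(f,i)\ne(0,0)$ is pure of dimension $2g+f-4$ by Lemma~\ref{lemdimw}, while the case $\Delta_0[\bar\calm_g]^0$ is empty because \eqref{eqblrprank0} forces $f(E)\ge 1$ whenever $E$ is not of compact type. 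Since $\bar\calm_g^f$ is itself pure of dimension $2g-3+f$, the displayed boundary is a proper closed subset of each component, so $\calm_g^f$ is open and dense.

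For (b), I would observe that $\phi_{g;r}$ is a composition of $r$ universal stable curves \cite{knudsen2}, hence flat and proper of relative dimension $r$. Since contracting rationally unstable tails preserves $\pic^0$, we have $\bar\calm_{g;r}^f=\phi_{g;r}\inv(\bar\calm_g^f)$ and $\calm_{g;r}^f=\phi_{g;r}\inv(\calm_g^f)$. Flat maps are open, so pulling back the dense open inclusion of (a) yields the desired dense open inclusion $\calm_{g;r}^f\subset\bar\calm_{g;r}^f$.

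For (c), start with $\phi_{g;r}(S)$: it is closed by properness and irreducible as the image of an irreducible set. By flatness every fiber has dimension $r$, so $\dim\phi_{g;r}(S)\ge\dim S-r=2g-3+f$, forcing $\phi_{g;r}(S)$ to be an irreducible component $\bar S_0$ of $\bar\calm_g^f$. By flatness again $\phi_{g;r}\inv(\bar S_0)$ is pure of dimension $2g-3+f+r$, so each of its irreducible components is top-dimensional in $\bar\calm_{g;r}^f$, hence an entire component of $\bar\calm_{g;r}^f$; by (b), each must meet the dense open $\calm_{g;r}^f$. The restriction $\phi_{g;r}|_{\calm_{g;r}^f}:\calm_{g;r}^f\ra\calm_g^f$ is smooth with geometrically irreducible fibers (over a smooth genus-$g$ curve $C$ the fiber is $C^r$ minus its diagonals, irreducible for $g\ge 1$), so $\phi_{g;r}\inv(S_0)$ is irreducible, where $S_0:=\bar S_0\cap\calm_g^f$. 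Thus $\phi_{g;r}\inv(\bar S_0)$ has exactly one irreducible component, which must be $S$, yielding $S=\phi_{g;r}\inv(\phi_{g;r}(S))$.

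The one genuine obstacle is to justify flatness of $\phi_{g;r}$ with fiber dimension $r$; this reduces to Knudsen's identification of $\bar\calm_{g;r+1}$ as the universal stable curve over $\bar\calm_{g;r}$, after which all three parts become bookkeeping built on Lemma~\ref{lemdimw}.
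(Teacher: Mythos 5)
Your proof is correct in substance, and parts (a) and (b) essentially track the paper's reasoning. One caveat on (a): Lemma~\ref{lemdimw} assumes $g\ge 2$, so your dimension count does not cover $g=1$; the paper dispatches that case by appeal to a well-known fact, and you should do the same. For (b), your flatness argument is an expansion of the paper's one-line observation that $\bar\calm_{g;r}^f = \bar\calm_{g;r}\cross_{\bar\calm_g}\bar\calm_g^f$; both work.

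In (c) you take a genuinely different route. The paper proves only the inclusion $\phi_{g;r}\inv(\phi_{g;r}(S))\subseteq S$, working first with the dense open $U = S\cap\calm_{g;r}$ to establish $\phi_{g;r}\inv(\phi_{g;r}(U)) = U$, and then extending to arbitrary $T$-points by a specialization argument using density of $\calm_{g;r}$ in $\bar\calm_{g;r}$. You instead identify $\phi_{g;r}(S)$ as a full irreducible component $\bar S_0$ of $\bar\calm_g^f$ (via properness and the dimension count) and use flatness to show that $\phi_{g;r}\inv(\bar S_0)$ is pure of top dimension, so all of its components are components of $\bar\calm_{g;r}^f$. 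This has the advantage of proving $S = \phi_{g;r}\inv(\phi_{g;r}(S))$ directly as an equality of closed substacks, from which the $T$-point reformulation is immediate, whereas the paper needs a separate closure argument for that. The one step you should spell out is the final uniqueness claim: having shown that $\phi_{g;r}\inv(S_0)$ is irreducible (with $S_0 := \bar S_0\cap\calm_g^f$), you still need to observe that every component $Z$ of $\phi_{g;r}\inv(\bar S_0)$ meets $\phi_{g;r}\inv(S_0)$ by part (b), so $Z\cap\phi_{g;r}\inv(S_0)$ is dense open in $Z$ and lies inside the single irreducible set $\phi_{g;r}\inv(S_0)$; taking closures and comparing (equal) dimensions then forces all such $Z$ to coincide. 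With that filled in, the argument is complete.
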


\begin{proof}
Part (a) is well-known if $g = 1$.  For $g \ge 2$, the result follows
immediately from Lemma \ref{lemdimw}, since $\bar\calm_g^f$ is pure of
dimension $2g+f-3$ \cite[Thm.\ 2.3]{FVdG:complete}.
Part (b) follows from the fact that the $p$-rank of a labeled curve
depends only on the underlying curve, so that $\bar\calm_{g;r}^f =
\bar\calm_{g;r} \cross_{\bar \calm_g} \bar\calm_g^f$.

For part (c), let $S$ be an irreducible component of
$\bar\calm_{g;r}^f$.  It suffices to show that
$\phi_{g;r}\inv(\phi_{g;r}(S)) \subseteq S$.
By part (b), $U = S \cap \calm_{g;r}$ is open and dense in $S$.  Therefore,
$S$ is the largest irreducible substack of $\bar\calm_{g;r}^f$ which
contains $U$. The fibers of $\phi_{g;r}\rest{U}$ are irreducible, so
$\phi_{g;r}\inv(\phi_{g;r}(U))$ is also an irreducible substack of $\bar 
\calm_{g;r}^f$ which contains $U$.  Thus $\phi_{g;r}\inv(\phi_{g;r}(U))\subset
S$. This shows that $\phi_{g;r}\inv(\phi_{g;r}(U)) = U$.  

To finish the proof, it suffices to show that a $T$-point of $S$
is a $T$-point of $\phi_{g;r}\inv(\phi_{g;r}(S))$ for an arbitrary $k$-scheme
$T$.  To this end, let $\alpha = (C; P_1, \ldots, P_r) \in S(T)$,
and let $\beta = (C; Q_1, \ldots, Q_r) \in \bar\calm_{g;r}^f(T)$.
Note that $\phi_{g;r}(\beta) = \phi_{g;r}(\alpha)$, and
$\phi_{g;r}(\alpha)$ is supported in the closure of $\phi_{g;r}(U)$ in
$\bar\calm_g^f$.  Because $\calm_{g;r}$ is dense in $\bar\calm_{g;r}$,
it follows that $\beta$ is supported in the closure of $\phi_{g;r}\inv(\phi_{g;r}(U))$ in
$\bar\calm_{g;r}^f$, which is $S$. 
\end{proof}

\begin{lemma} \label{LinterDelta0}
Suppose $g \ge 2$ and $0 \le f \le g$.  Let $S$ be an irreducible component of $\calm_g^f$.
\begin{alphabetize}
\item Then $\bar S$ intersects $\Delta_0[\bar \calm_g]$ if and only if $f \ge 1$.
\item If $f \ge 1$, then each irreducible component of $\Delta_0[\bar S]$ contains
the image of a component of $\bar\calm_{g-1,2}^{f-1}$ under $\kappa_{g-1;2}$.
\end{alphabetize}
\end{lemma}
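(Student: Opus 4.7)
The plan is: (a)'s ``only if'' direction is immediate from semicontinuity; (b) follows from (a) by a dimension count combining Krull's principal ideal theorem with Lemma \ref{lemdimw}; and (a)'s ``if'' direction I prove by induction on $g$, simultaneously with (b), using Diaz's theorem together with the local product structure of $\bar\calm_g$ along $\Delta_i$.

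For (a) ``only if'', every stable curve in $\Delta_0[\bar\calm_g]$ has $p$-rank at least $1$ by \eqref{eqblrprank0}; since $S \subset \calm_g^f$ and the $p$-rank-$\le f$ locus is closed in $\bar\calm_g$, every point of $\bar S$ has $p$-rank $\le f$. These are incompatible when $f=0$. For (b), let $Z$ be an irreducible component of $\Delta_0[\bar S] = \bar S \cap \Delta_0[\bar\calm_g]$; Krull's theorem gives $\dim Z \ge 2g-4+f$, while the generic $p$-rank $f'$ on $Z$ satisfies $1 \le f' \le f$, and $Z \subset \overline{\Delta_0[\bar\calm_g]^{f'}}$ which has pure dimension $2g-4+f'$ by Lemma \ref{lemdimw}, so $\dim Z \le 2g-4+f'$. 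Thus $f' = f$ and $\dim Z = 2g-4+f$, so $Z$ is an irreducible component of $\overline{\Delta_0[\bar\calm_g]^f}$; since $\Delta_0[\bar\calm_g]^f = \kappa_{g-1;2}(\bar\calm_{g-1;2}^{f-1})$ and $\kappa_{g-1;2}$ is finite, $Z$ contains $\kappa_{g-1;2}(Z')$ for some irreducible component $Z'$ of $\bar\calm_{g-1;2}^{f-1}$.

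For (a) ``if'' I argue by induction on $g$, with base input (for a genus-$1$ factor in a clutching decomposition) the direct observation that the nodal cubic gives a point of $\bar\calm_{1;1}^1 \cap \Delta_0[\bar\calm_{1;1}]$. For $g \ge 2$, let $T$ be the unique irreducible component of $\bar\calm_g^f$ with $T \cap \calm_g = S$; then $\bar T = \bar S$ is projective of dimension $2g-3+f \ge g-1$. By Diaz's theorem, $\bar T \not\subset \calm_g$, so $\bar T \cap \del\bar\calm_g \ne \emptyset$. Each irreducible component of $\bar T \cap \del\bar\calm_g$ has dimension $\ge 2g-4+f$ by Krull, while a component of generic $p$-rank $f' < f$ would lie in $\overline{(\del\bar\calm_g)^{f'}}$ of pure dimension $2g-4+f'$ (Lemma \ref{lemdimw}); hence each such component has generic $p$-rank exactly $f$. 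Pick $\eta \in \bar T \cap \del\bar\calm_g$ of $p$-rank $f$. If $\eta \in \Delta_0$, we are done. Otherwise $\eta \in \Delta_i$ for some $i \ge 1$, representing $C_1 \cup_Q C_2$ with $f(C_1)+f(C_2)=f$; without loss of generality, $f(C_1) \ge 1$. Near $\eta$, $\bar T$ is \'etale-locally of the form $A_0 \times B_0 \times \mathbb{A}^1$, with $A_0 \subset \bar\calm_{i;1}^{f(C_1)}$ and $B_0 \subset \bar\calm_{g-i;1}^{f(C_2)}$ irreducible components through $(C_1, Q)$ and $(C_2, Q)$ and $\mathbb{A}^1$ the smoothing of the separating node. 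Applying both (a) and (b) inductively to the image $A' = \phi_{i;1}(A_0)$ in $\bar\calm_i^{f(C_1)}$ (or the base input for $i = 1$) shows that $A'$ meets $\Delta_0[\bar\calm_i]$; pulling back via $\phi_{i;1}$ using Lemma \ref{lemlabeled}(c) and $\Delta_0[\bar\calm_{i;1}] = \phi_{i;1}^{-1}(\Delta_0[\bar\calm_i])$, $A_0$ meets $\Delta_0[\bar\calm_{i;1}]$. This yields a nearby point of $\bar T$ whose underlying stable curve has a non-separating node inside $C_1$, hence a point of $\bar S \cap \Delta_0[\bar\calm_g]$.

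The main obstacle is the inductive step of (a), which requires combining (i) Diaz's theorem to push $\bar T$ into the boundary, (ii) the Krull-plus-Lemma-\ref{lemdimw} argument to guarantee a boundary point of top $p$-rank $f$, and (iii) the \'etale-local product structure at a reducible degeneration to trade a separating node for a non-separating one via the inductive hypothesis (using both (a) and (b)) on a factor of positive $p$-rank.
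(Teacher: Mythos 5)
Your handling of part (a) ``only if'' and of part (b) matches the paper's argument almost exactly: the paper also obtains the lower bound $\dim Z \ge 2g+f-4$ from the fact that $\Delta_0$ is a divisor in the proper stack $\bar\calm_g$ and $\bar S\not\subset\Delta_0$, then pins down the generic $p$-rank on $Z$ using Lemma \ref{lemdimw} and concludes via finiteness of $\kappa_{g-1;2}$.

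For part (a) ``if'', however, you take a genuinely different and much longer route. The paper invokes \cite[Lemma 2.4]{FVdG:complete}, which bounds the dimension of a complete substack of $\caln_g=\bar\calm_g-\Delta_0$ by $2g-3$. Since $\bar S$ is complete of dimension $2g-3+f>2g-3$ when $f\ge 1$, it cannot lie in $\caln_g$, so it meets $\Delta_0$ -- one line. You instead use the weaker fact that $\bar S$ meets $\del\bar\calm_g$ (via Diaz's bound), then run an induction on $g$ that degenerates a genus-$i$ factor until a non-separating node appears. This works, but it essentially re-derives the content of the cited lemma and duplicates much of the structure of Proposition \ref{propchain}. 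Note also that in characteristic $p$ the appropriate reference for a Diaz-type bound is precisely \cite{FVdG:complete}, not the original complex-analytic Diaz/Looijenga results.

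There is one imprecision worth flagging in your inductive step: you assert that near $\eta$ the stack $\bar T$ is \'etale-locally a product $A_0\times B_0\times \mathbb A^1$. There is no reason for the $p$-rank-$f$ stratum to be a product once the node is smoothed (the $p$-rank of the smoothing depends on $(a,b,t)$ in general), and the paper makes no such claim. What your argument actually uses is the weaker and correct statement -- already contained in your dimension count -- that the component $Z$ of $\bar T\cap\Delta_i$ through $\eta$ equals $\kappa_{i;1,g-i;1}(A_0\times B_0)$ for components $A_0\subset\bar\calm_{i;1}^{f(C_1)}$ and $B_0\subset\bar\calm_{g-i;1}^{f(C_2)}$, so that $\kappa_{i;1,g-i;1}(A_0\times B_0)\subseteq\bar T$. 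That inclusion suffices: degenerate $A_0$ into $\Delta_0[\bar\calm_{i;1}]$ (by induction, or the nodal cubic if $i=1$) and the image lands in $\bar T\cap\Delta_0[\bar\calm_g]$. I recommend replacing the \'etale-local product assertion by this precise inclusion, or, more simply, replacing the entire ``if'' argument with the citation to \cite[Lemma 2.4]{FVdG:complete}.
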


In other words, Lemma \ref{LinterDelta0}(b) states that if $f \geq 1$ and 
if $\eta$ is a generic point of $\Delta_0[\bar S]$, 
then the normalization $\til \calc_{g,\eta}$ of $\calc_{g,\eta}$ is a smooth
curve of genus $g-1$ and $p$-rank $f-1$. 
The containment in Lemma \ref{LinterDelta0}(b) may be strict since
$\Delta_0[\bar S]$ may contain points $s$ such that $\calc_{g,s}$ has $p$-rank strictly less than $f$.

\begin{proof}
If $f = 0$, 
then equation \eqref{eqblrprank0} implies that $\bar S$ does not intersect $\Delta_0[\bar \calm_g]$.
If $f \geq 1$, then $\bar S \subset \bar \calm_g$ is a complete
substack of dimension greater than $2g-3$.  Let $\caln_g = \bar\calm_g
- \Delta_0$ be the sublocus of curves of compact type; it is open in
$\bar \calm_g$.  By \cite[Lemma 2.4]{FVdG:complete}, a complete
substack of $\caln_g$ has dimension at most $2g-3$.  Thus, $\bar S$ is
not contained in $\caln_g$ and so $\bar S$ intersects $\Delta_0$
nontrivially.  This completes part (a).

For part (b), recall that $\bar S$ is not contained in $\del
\bar\calm_g$ by Lemma \ref{lemlabeled}(a). 
Since $\bar\calm_g$, $\bar S$ and $\del \bar \calm_g$ are proper, 
the intersection of $\bar S$ with the divisor $\Delta_0[\bar\calm_g]$
has pure dimension $\dim\bar S - 1$, which equals $\dim (\Delta_0[\bar\calm_g]^f)$
by Lemma \ref{lemdimw}.  Thus each irreducible component
of $\bar S \cap \Delta_0[\bar\calm_g]$ contains the image of some
component of $\bar \calm_{g-1;2}^{f-1}$ under the finite morphism $\kappa_{g-1;2}$.
In particular, it contains the image of the moduli point of a smooth curve of genus
$g-1$ and $p$-rank $f-1$.
\end{proof}

The next result shows that the closure of each irreducible component of $\calm_g^f$ 
intersects $\Delta_i[\bar \calm_g]^f$ in every way possible.

\begin{proposition} \label{PinterDelta1}
Suppose $g \ge 2$ and $0 \le f \le g$.  
Suppose $1 \le i \le g-1$ and 
$(f_1,f_2)$ is a pair satisfying the conditions in \eqref{f1f2conditions}.
Let $S$ be an irreducible component of $\calm_g^f$.
\begin{alphabetize}
\item 
Then $\bar S$ intersects $\kappa_{i; 1, g-i;1}(\bar \calm_{i;1}^{f_1} \times \bar \calm_{g-i;1}^{f_2})$.
\item Each irreducible component of the intersection contains the image of a component of 
$\bar \calm_{i;1}^{f_1} \times \bar \calm_{g-i;1}^{f_2}$.
\end{alphabetize}
\end{proposition}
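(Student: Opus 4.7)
The plan is to follow the template of Lemma \ref{LinterDelta0}(b)---a dimension count on the boundary combined with semi-continuity of the $p$-rank---to derive (b) from (a), and then to handle the more delicate existence claim in (a) by induction on $g$. For the dimension count, $\bar S$ is a complete irreducible substack of $\bar\calm_g$ of dimension $2g+f-3$, and by Lemma \ref{lemlabeled}(a) is not contained in the divisor $\Delta_i[\bar\calm_g]$, so $\bar S\cap\Delta_i[\bar\calm_g]$ is either empty or pure of dimension $2g+f-4$. Because the $p$-rank can only decrease under specialization, every point of $\bar S$ has $p$-rank at most $f$, so $\bar S\cap\Delta_i[\bar\calm_g]\subseteq\bigcup_{f'\le f}\Delta_i[\bar\calm_g]^{f'}$; Lemma \ref{lemdimw} gives $\dim\Delta_i[\bar\calm_g]^{f'}=2g+f'-4$, strictly less than $2g+f-4$ for $f'<f$, so the generic point of every component of $\bar S\cap\Delta_i[\bar\calm_g]$ lies in $\Delta_i[\bar\calm_g]^f=\bigcup_{(f_1,f_2)}\kappa_{i;1,g-i;1}(\bar\calm_{i;1}^{f_1}\times\bar\calm_{g-i;1}^{f_2})$. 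This already yields (b) given (a): if the intersection with a specified piece is nonempty, it has pure dimension $2g+f-4 = \dim(\bar\calm_{i;1}^{f_1}\times\bar\calm_{g-i;1}^{f_2})$ by Lemma \ref{lemdimw}, and since $\kappa_{i;1,g-i;1}$ is finite, each component of the intersection is the $\kappa$-image of an entire irreducible component of $\bar\calm_{i;1}^{f_1}\times\bar\calm_{g-i;1}^{f_2}$.

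For (a), I would proceed by induction on $g$. The base case $g=2$ is classical: $\calm_2^f$ is irreducible for each $f$, and the required intersections with $\Delta_1[\bar\calm_2] = \kappa_{1;1,1;1}(\bar\calm_{1;1}\times\bar\calm_{1;1})$ for every admissible $(f_1,f_2)$ with $f_1+f_2=f$ follow from explicit smoothings of pairs of elliptic curves of prescribed $p$-ranks. For $g\ge 3$ and $f\ge 1$, Lemma \ref{LinterDelta0}(b) supplies a component $R$ of $\bar\calm_{g-1;2}^{f-1}$ with $\kappa_{g-1;2}(R)\subseteq\bar S$; by Lemma \ref{lemlabeled}(c), $R = \phi_{g-1;2}\inv(S')$ for some irreducible component $S'$ of $\bar\calm_{g-1}^{f-1}$. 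Applying the inductive hypothesis to $S'$ at the split $(i,g-1-i)$ with $p$-ranks $(f_1,f_2-1)$ when $f_2\ge 1$ (or, symmetrically, at $(i-1,g-i)$ with $p$-ranks $(f_1-1,f_2)$ when $f_1\ge 1$), one obtains a nodal curve $D = C_1\cup_Z C_2 \in\bar{S'}$ with the prescribed genera and $p$-ranks. Place the two marked points of $R$ as smooth sections on whichever component of $D$ has had its $p$-rank reduced (say $C_2$); by Lemma \ref{lemlabeled}(c) this labeled curve represents a $T$-point of $\bar R$. Clutching via $\kappa_{g-1;2}$ then produces a stable curve in $\bar S$, and by \eqref{eqblrprank} and \eqref{eqblrprank0} the self-identification raises the $p$-rank and arithmetic genus of $C_2$ by one, yielding precisely a point of $\kappa_{i;1,g-i;1}(\bar\calm_{i;1}^{f_1}\times\bar\calm_{g-i;1}^{f_2})$ with the prescribed $(f_1,f_2)$. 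The case $f=0$ reduces to the unique pair $(0,0)$ and is forced by the dimension count of the first paragraph.

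The main obstacle is the combinatorial bookkeeping in the inductive step. One must verify that the two marked points can always be placed as smooth sections on a component of $D$ of positive genus (to sidestep unstable genus-$0$ corner cases such as $i=1$ with $g=3$ or $f_1=i$ with $f_2=0$), so that the resulting dual graph is exactly the separating configuration of $\kappa_{i;1,g-i;1}$ with no extraneous nodes or self-identifications. The symmetry $\Delta_i=\Delta_{g-i}$, the permutation flexibility of Lemma \ref{lemlabeled}(c), and the commutative diagram \eqref{diagclutchbox} should together cover these edge cases. A subtler issue is that Lemma \ref{LinterDelta0}(b) only guarantees the existence of \emph{some} component $R$ of $\bar\calm_{g-1;2}^{f-1}$ whose image lies in $\bar S$, not control over the corresponding $S' = \phi_{g-1;2}(R)$; however the inductive hypothesis applies to any such $S'$, so the argument goes through for whichever $S'$ arises.
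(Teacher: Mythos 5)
Your overall strategy is the same as the paper's: derive (b) from (a) by a dimension count (pure codimension one in $\bar S$, matched against $\dim\Delta_i[\bar\calm_g]^f$ and the finiteness of $\kappa$), then prove (a) by induction on $g$, stepping down via Lemma \ref{LinterDelta0}(b) and adding a node. Your (a)$\Rightarrow$(b) paragraph is essentially the paper's argument and is correct. But the proof of (a) has two genuine gaps.

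\textbf{The $f=0$ base case.} You write that $f=0$ ``is forced by the dimension count of the first paragraph.'' It is not. That paragraph only establishes the conditional: \emph{if} $\bar S\cap\Delta_i[\bar\calm_g]\neq\emptyset$, then the intersection is pure of dimension $2g+f-4$ and generically supported in $\Delta_i[\bar\calm_g]^f$. It says nothing about nonemptiness --- $\Delta_i$ is not ample, so a complete substack can perfectly well miss a boundary divisor (indeed $\bar S\subset\caln_g$ already misses $\Delta_0$ when $f=0$ by Lemma \ref{LinterDelta0}(a)). Some external input is required. The paper invokes a result of Faber--van der Geer (\cite[Lemma 2.5]{FVdG:complete}) asserting that a complete substack of $\caln_g$ of the relevant dimension must meet $\Delta_i$ for $1\le i\le g-1$; once $\bar S\cap\Delta_i\neq\emptyset$, the point automatically lies in $\kappa_{i;1,g-i;1}(\bar\calm_{i;1}^0\times\bar\calm_{g-i;1}^0)$ because both sides are forced to have $p$-rank $0$. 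Without some such argument the $f=0$ base of your induction is missing, and the induction cannot start.

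\textbf{The corner case $(i,f_1,f_2)=(1,1,0)$ with $f=1$.} Your inductive step offers two options: apply the inductive hypothesis at the split $(i,g-1-i)$ with $p$-ranks $(f_1,f_2-1)$ (requires $f_2\ge 1$ and $g-1-i\ge 1$), or at $(i-1,g-i)$ with $(f_1-1,f_2)$ (requires $f_1\ge 1$ and $i\ge 2$). When $(i,f_1,f_2)=(1,1,0)$ (equivalently $(g-1,0,1)$ after $\Delta_i=\Delta_{g-i}$), neither option is available: $f_2=0$ kills the first, $i=1$ kills the second. You acknowledge that such corner cases exist and suggest that symmetry and Lemma \ref{lemlabeled}(c) ``should together cover these edge cases,'' but they do not, because the obstacle is not combinatorial bookkeeping --- both legs of your inductive step literally fail. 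The paper isolates this as its Case 2 and resolves it with a separate argument: it first shows $\bar S$ contains the image of $\bar\calm_{2;1}^1\times\bar\calm_{g-2;1}^0$ (available from Case 1 since $i=2>1$), then uses the explicit claim that $\bar\calm_2^1$ is irreducible and meets $\kappa_{1;1,1;1}(\bar\calm_{1;1}^1\times\bar\calm_{1;1}^0)$, and finally reattaches the genus $g-2$ tail to land in $\kappa_{1;1,g-1;1}(\bar\calm_{1;1}^1\times\bar\calm_{g-1;1}^0)$. Without this, your proof of (a) does not cover $\calm_g^1$ at the split $(1,g-1)$, which is exactly a configuration used downstream (it is case (B)/(B') in Corollary \ref{cordegen}).

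A smaller remark: the $g=2$ base case is not ``classical'' in a way you can simply cite; the paper proves the needed claim (irreducibility of $\calm_2^1$ via the Torelli map and $\cala_2^1$, plus exhibiting a chain of an ordinary and a supersingular elliptic curve in $\bar\calm_2^1$). This is a minor omission compared to the two above, but it should be made explicit rather than deferred to ``explicit smoothings.''
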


Proposition \ref{PinterDelta1}(b) implies that if $\eta$ is a generic point
of $\bar S \cap \kappa_{i; 1, g-i;1}(\bar
\calm_{i;1}^{f_1} \times \bar \calm_{g-i;1}^{f_2})$, then
$\calc_{g,\eta}$ is a chain of two smooth curves of respective genera
$i$ and $g-i$ and respective $p$-ranks $f_1$ and $f_2$.

\begin{proof}
{\bf (a) implies (b):}
To see that (a) implies (b) for fixed $g$, $f$, $i$, and $(f_1,f_2)$, note that $\bar S \cap
  \Delta_i[\bar \calm_g]^f$ has pure dimension $\dim \bar S - 1 = 2g+f-4$
by the same reasoning as in the proof of Lemma \ref{LinterDelta0}(b).
  By Lemma \ref{lemdimw} this is the same as the dimension of each
  component of $\kappa_{i;1, g-i;1}(\bar \calm_{i;1}^{f_1} \times \bar
  \calm_{g-i;1}^{f_2})$. 

\bigskip

{\bf Base cases:}
The proof of (a) is by induction on $g$ while holding $g-f$ fixed.
The two base cases are when $f = 0$ and when $g=2$.
When $f=0$, then the only possibility for $(f_1,f_2)$ is $(0,0)$.  By
\ref{LinterDelta0}(a), $\bar S$ is contained in $\caln_g =\bar \calm_g
- \Delta_0$.  By \cite[Lemma 2.5]{FVdG:complete}, $\bar S$ intersects
$\Delta_i[\bar \calm_g]$, and a point in the intersection must be in
$\kappa_{i; 1, g-i;1}(\bar \calm_{i;1}^{0} \times \bar
\calm_{g-i;1}^{0})$.

For the other base case, let $g=2$.  The statement is true for $g=f=2$
(or more generally when $g=f$) because $\calm_g^g$ is open and dense
in $\bar \calm_g$.  If $f=1$, then $i=1$.  Without loss of generality, $(f_1, f_2)=(1,0)$.  
Thus the next claim suffices to conclude the proof of the base cases.

\bigskip

{\bf Claim:}
$\bar \calm_2^1$ is irreducible and intersects $\kappa_{1; 1, 1;1}(\bar
\calm_{1;1}^{1} \times \bar \calm_{1;1}^{0})$.

To prove the claim, recall that the Torelli morphism $\calm_2 \ra \cala_2$
is an inclusion \cite[Lemma 1.11]{oortsteenbrink}.
Since $\dim (\calm_2^1) =\dim(\cala_2^1)$, and since  $\cala_2^1$ is irreducible (e.g., \cite[Ex.\
11.6]{evdg}), it follows that $\calm_2^1$ is irreducible.  Consider a
chain of two smooth elliptic curves, one of which is ordinary and one
supersingular, intersecting in an ordinary double
point.  The moduli point of this curve is in $\bar \calm_2^1 \cap\kappa_{1; 1, 1;1}(\bar
\calm_{1;1}^{1} \times \bar \calm_{1;1}^{0})$.

\bigskip

{\bf Inductive step:} Suppose that $g \geq 3$ and $1 \le f \le g$.  The inductive
hypothesis is that, given $1 \le i' \le g-2$, given a pair
$(f'_1,f'_2)$ such that $0 \leq f'_1 \leq i'$, $0 \leq f'_2 \leq
g-1-i'$, and $f'_1+f'_2 =f-1$, and given an irreducible component $Z$
of $\calm_{g-1}^{f-1}$, then $\bar Z$ intersects $\kappa_{i'; 1,
g-1-i';1}(\bar \calm_{i';1}^{f'_1} \times \bar
\calm_{g-1-i';1}^{f'_2})$; and, since (a) implies (b),
that each irreducible component of the intersection contains a component of $\kappa_{i'; 1,
g-1-i';1}(\bar \calm_{i';1}^{f'_1} \times \bar \calm_{g-1-i';1}^{f'_2})$.

Let $S$ be an irreducible component of $\calm_g^f$.  Let $1 \le i \le
g-1$, and let $(f_1,f_2)$ be a pair which satisfies
\eqref{f1f2conditions}.  Possibly after exchanging $i$ with $g-i$ and
$f_1$ with $f_2$, we suppose that $f_1 > 0$.  

\bigskip

{\bf Case 1:} Suppose $i>1$.  Let $(g_1',g_2') = (i-1,g-i)$, and let $(f_1',f_2') = (f_1-1,f_2)$.
Note that $g_1'+g_2' = g-1$ and $f_1'+f_2'= f-1$.
Also $0 \le f_j' \le g_j'$ and $g'_j \ge 1$ for $j=1,2$.
So $1 \leq g_1' \leq g-2$.
By Lemma \ref{LinterDelta0}(b), $\bar S$ contains the image of a
component $\til Z$ of $\bar\calm_{g-1;2}^{f-1}$ under
$\kappa_{g-1;2}$.  The inductive hypothesis, applied to (the interior
of) the component $\bar Z =\phi_{g-1;2}(\til Z)$ of
$\bar\calm_{g-1}^{f-1}$, shows that $\bar Z$ intersects
$\kappa_{g_1';1,g_2';1}(\bar \calm_{g_1';1}^{f_1'} \cross \bar
\calm_{g_2';1}^{f_2'})$.  

Let $\xi$ be a generic geometric point of the intersection.  By part
(b) of the inductive hypothesis, the curve $\calc_{g-1,\xi}$ is a chain of two
irreducible curves, $Y_1$ and $Y_2$, with respective genera $g_1'$
and $g_2'$ and $p$-ranks $f_1'$ and $f_2'$, intersecting in one point
$P$ which is an ordinary double point.  Let $P_1$ and $P_2$ be two
distinct points of $Y_1 - \st P$.  Note that after identifying the
points $P_1$ and $P_2$ on $Y_1$, one obtains a (singular) curve of
genus $g_1'+1=i$ with $p$-rank $f_1'+1=f_1$.
The moduli point $\til \xi$ of the labeled curve
$(\calc_{g-1,\xi},\st{P_1,P_2})$ lies in $\til Z$ (Lemma \ref{lemlabeled}(c)).  Then $\kappa_{g-1;2}(\til \xi) \in
\bar S \cap \kappa_{i;1,g-i;1}(\bar\calm_{i;1}^{f_1}\cross \bar
\calm_{g-i;1}^{f_2})$.  

\bigskip

{\bf Case 2:} Suppose $i=1$.  Then $g-i = g-1$  and $f_1 = 1$.  If $f_2
= f-1 > 0$, then case (1) applies after reindexing.  Therefore, it
suffices to consider the remaining case $(i,g-i) = (1,g-1)$ and
$(f_1,f_2) = (1,0)$.
Let $S$ be an irreducible component of $\calm_g^1$.  Then
$\bar S$ contains an irreducible component of
$\kappa_{2;1,g-2;1}(\bar\calm_{2;1}^1 \cross \bar\calm_{g-2;1}^0)$ by case (1).  
By the claim above, and the implication (a) $\to$ (b),  
$\bar\calm_2^1$ contains the image of an irreducible component of
$\bar \calm_{1;1}^1 \cross \bar \calm_{1;1}^0$ under $\kappa_{1;1,1;1}$.
Thus $\bar \calm_{2;1}^1$ contains the image of an irreducible
component of $\bar \calm_{1;1}^1\cross \bar \calm_{1;2}^0$ under $\kappa_{1;1,1;2}$.
Therefore, $\bar S$ contains the image of an irreducible component of
$\bar \calm_{1;1}^1 \cross \bar \calm_{1;2}^0 \cross \bar
\calm_{g-2;1}^0$ under $\kappa_{1,1,g-2}$.  In particular, $\bar S$ has
nonempty intersection with $\kappa_{1;1,g-1;1}(\bar \calm_{1;1}^1 \cross \bar
\calm_{g-1;1}^0)$.
\end{proof}

The next result shows that components of $\bar \calm_g^f$ deeply
intersect the boundary of $\bar \calm_g$. 

\begin{proposition}
\label{propchain}
Suppose $g \ge 2$ and $0 \le f \le g$.  Let $S$ be an irreducible
component of $\calm_g^f$.  Suppose $g_1, \ldots, g_m$ are positive
integers and $f_1, \ldots, f_m$ are integers such that $\sum g_i = g$,
$\sum f_i = f$, and $0 \le f_i \le g_i$ for each $1 \le i \le m$.
Then $\bar S$ contains the image of a component of 
\[
\bar \calm_{g_1;1}^{f_1} \cross \bar \calm_{g_2;2}^{f_2} \cross \bar
\calm_{g_3;2}^{f_3} \cross \cdots \cross \bar
\calm_{g_{m-1};2}^{f_{m-1}}\cross \bar \calm_{g_m;1}^{f_m}
\]
under the clutching map.
\end{proposition}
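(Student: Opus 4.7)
My plan is to induct on $m$. The case $m = 2$ is exactly Proposition \ref{PinterDelta1}(b), applied with $i = g_1$ and the pair $(f_1, f_2)$ as given; the degenerate case $m = 1$ is immediate since $\bar S$ is itself an irreducible component of $\bar\calm_g^f$ and pulls back to a component of $\bar\calm_{g;1}^f$ by Lemma \ref{lemlabeled}(c).

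For $m \ge 3$, set $g' := g_2 + \cdots + g_m$ and $f' := f_2 + \cdots + f_m$; since each $g_i \ge 1$, one has $g' \ge m - 1 \ge 2$. I would apply Proposition \ref{PinterDelta1}(b) to $S$ with $i = g_1$ and the pair $(f_1, f')$ to produce irreducible components $Z_1$ of $\bar\calm_{g_1;1}^{f_1}$ and $Z_2$ of $\bar\calm_{g';1}^{f'}$ satisfying $\kappa_{g_1;1,\, g';1}(Z_1 \cross Z_2) \subseteq \bar S$. By Lemma \ref{lemlabeled}(c), $Y := \phi_{g';1}(Z_2)$ is an irreducible component of $\bar\calm_{g'}^{f'}$ with $Z_2 = \phi_{g';1}\inv(Y)$; by Lemma \ref{lemlabeled}(a), $Y = \bar{S'}$ for a unique irreducible component $S'$ of $\calm_{g'}^{f'}$.

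Now apply the inductive hypothesis to $S'$ with data $(g_2, \ldots, g_m;\, f_2, \ldots, f_m)$: some component $W = Z_2^\flat \cross Z_3^\flat \cross \cdots \cross Z_m^\flat$ of $\bar\calm_{g_2;1}^{f_2} \cross \bar\calm_{g_3;2}^{f_3} \cross \cdots \cross \bar\calm_{g_m;1}^{f_m}$ has chain-clutching image contained in $\bar{S'}$. To assemble a length-$m$ chain inside $\bar S$, I would augment the labeling on the leftmost factor: setting $Y_2 := \phi_{g_2;1}(Z_2^\flat)$, which is a component of $\bar\calm_{g_2}^{f_2}$ by Lemma \ref{lemlabeled}(c), and $\tilde Z_2^\flat := \phi_{g_2;2}\inv(Y_2)$, which is a component of $\bar\calm_{g_2;2}^{f_2}$ by the same lemma, I consider the modified chain-clutching map $\kappa' \colon \bar\calm_{g_2;2}^{f_2} \cross \bar\calm_{g_3;2}^{f_3} \cross \cdots \cross \bar\calm_{g_m;1}^{f_m} \ra \bar\calm_{g';1}^{f'}$ that iterates the two-factor clutching along the chain while retaining the first label of the leftmost factor. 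The image $\kappa'(W')$ of $W' := \tilde Z_2^\flat \cross Z_3^\flat \cross \cdots \cross Z_m^\flat$ then lies in $\phi_{g';1}\inv(Y) = Z_2$, so $\kappa_{g_1;1,\, g';1}(Z_1 \cross \kappa'(W')) \subseteq \bar S$; this composite coincides with the full chain-clutching evaluated on the length-$m$ product component $Z_1 \cross W'$, completing the induction.

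The principal technical obstacle lies in the bookkeeping of marked points. One must verify that the forgetful preimage of each irreducible component is again an irreducible component (a consequence of Lemma \ref{lemlabeled}(c) together with a dimension count using the one-dimensional irreducible fibers of the forgetful maps), and that the iterated two-factor clutching maps compose precisely to the chain-clutching of the full length-$m$ product; once these labeling manipulations are in hand, the geometric content is packaged entirely into the single application of Proposition \ref{PinterDelta1}(b) at each inductive step.
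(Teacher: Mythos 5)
Your proof is correct and follows essentially the same strategy as the paper: induction on $m$ with the $m=2$ case handled by Proposition \ref{PinterDelta1}(b), and Lemma \ref{lemlabeled}(c) used to relocate the marked point that connects the inductively-constructed chain to the newly attached component. The only difference from the paper's argument is a mirror symmetry — you peel off the leftmost factor $g_1$ and recurse on $(g_2,\ldots,g_m)$, whereas the paper peels off the rightmost factor $g_m$ and recurses on $(g_1,\ldots,g_{m-1})$ — which is purely cosmetic.
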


\begin{proof}
The proof is by induction on $m$, and is proved for all $g$
simultaneously.  The case $m=1$ is trivial, while the case $m=2$ is
Proposition \ref{PinterDelta1}.

Now suppose the result is known for every genus $g'$ and every
partition of $g'$ with at most $m-1$ parts.  Suppose $g$, $f$, $g_1,
\ldots, g_m$, 
and $f_1, \ldots, f_m$ satisfy the conditions of Proposition \ref{propchain}.
Let $g' = \sum_{i=1}^{m-1} g_i$ and $f' = \sum_{i=1}^{m-1}f_i$.  By
Proposition \ref{PinterDelta1}, there exist components $Z'$ of $\bar
\calm_{g';1}^{f'}$ and  $Z$ of $\bar \calm_{g_m;1}^{f_m}$
such that $\bar S$ contains $\kappa_{g';1,g_m;1}(Z'\cross Z)$.  By the
inductive hypothesis, $\phi_{g';1}(Z')$ contains $\kappa(Y)$ for some 
component $Y$ of
\[
\bar \calm_{g_1;1}^{f_1} \cross \bar \calm_{g_2;2}^{f_2} \cross \bar
\calm_{g_3;2}^{f_3} \cross \cdots \cross \bar
\calm_{g_{m-2};2}^{f_{m-2}}\cross \bar \calm_{g_{m-1};1}^{f_{m-1}}.
\]
Thus, $Z'$ contains $\kappa(Y)\cross_{\bar\calm_{g'}}
\bar\calm_{g';1}$.  
By Lemma \ref{lemlabeled}(c), one can choose the
labeled point to be supported on the last component of $\calc_{g',\kappa(Y)}$.
Thus, $Z'$ contains the image of a component $X$ of
\[
\bar \calm_{g_1;1}^{f_1} \cross \bar \calm_{g_2;2}^{f_2} \cross \bar
\calm_{g_3;2}^{f_3} \cross \cdots \cross \bar
\calm_{g_{m-2};2}^{f_{m-2}}\cross \bar \calm_{g_{m-1};2}^{f_{m-1}}.
\]
Then
$\bar S$ contains the image of $X\cross Z$.
\end{proof}

The next corollary is not used in the rest of the paper, but is
included to show that Proposition \ref{propchain} generalizes
\cite[Lemma 2.5]{FVdG:complete}, which states that every component of
$\bar\calm_g^0$ contains the moduli point of a chain of supersingular
elliptic curves.

\begin{corollary} \label{Cchainelliptic}
Suppose $g \ge 2$ and $ 0 \le f \le g$.  Let $\Omega \subset \st{1,
\ldots, g}$ be a subset of cardinality $f$.  Let $S$ be an irreducible
component of $\calm_g^f$.  Then $\bar S$ contains the moduli point of a
chain of elliptic curves $E_1$, $\ldots$, $E_g$, where $E_j$ is
ordinary if and only if $j \in \Omega$.
\end{corollary}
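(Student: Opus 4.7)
The plan is to deduce this directly from Proposition \ref{propchain} by taking the partition of $g$ into $g$ parts of size one. More precisely, I would set $m = g$ and $g_i = 1$ for every $i \in \{1, \ldots, g\}$, so that $\sum g_i = g$ is automatic. The $p$-rank data are encoded by the set $\Omega$: set $f_j = 1$ if $j \in \Omega$ and $f_j = 0$ otherwise. Then $0 \le f_j \le g_j = 1$, and $\sum f_j = |\Omega| = f$, so the hypotheses of Proposition \ref{propchain} are satisfied.

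Applying Proposition \ref{propchain} to the irreducible component $S$ of $\calm_g^f$, I obtain that $\bar S$ contains the image, under the iterated clutching map, of some component of
\[
\bar\calm_{1;1}^{f_1} \cross \bar\calm_{1;2}^{f_2} \cross \cdots \cross \bar\calm_{1;2}^{f_{g-1}} \cross \bar\calm_{1;1}^{f_g}.
\]
Here each factor $\bar\calm_{1;r}^{f_j}$ parametrizes an elliptic curve (with $r$ marked sections) which is ordinary when $f_j=1$ and supersingular when $f_j=0$; both strata are nonempty, so one may actually choose a geometric point $(E_j; \cdot)$ in each factor with the prescribed $p$-rank. The clutching map then identifies these elliptic curves successively along their marked sections to produce a chain $E_1 - E_2 - \cdots - E_g$ of $g$ elliptic curves meeting in ordinary double points, with $E_j$ ordinary precisely when $j \in \Omega$.

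The moduli point of this chain therefore lies in $\bar S$, which is exactly the statement of the corollary. There is no serious obstacle, since the heavy lifting has already been done in Proposition \ref{propchain}; the only small point to verify is that the clutching construction used to build the chain matches the intended combinatorial picture (linear chain, correct assignment of ordinary/supersingular factors), and this is immediate from the definition of $\kappa_{g_1;r_1,g_2;r_2}$ and the labeling flexibility granted by Lemma \ref{lemlabeled}(c).
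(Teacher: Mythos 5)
Your proof is correct and is exactly the instantiation the paper intends: the paper's proof simply says the corollary follows immediately from Proposition \ref{propchain}, and you have spelled out the specific choice of partition ($m=g$, all $g_i=1$, $f_j = \mathbf{1}_{j\in\Omega}$) that makes this work.
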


\begin{proof}
This follows immediately from Proposition \ref{propchain}.
\end{proof}

The next result is the form of Proposition \ref{propchain} used to
prove Theorem \ref{Tprankmono}, which relies on degeneration
to $\Delta_{1,1}$.  We label the four possibilities for
$(f_1,f_2,f_3)$ such that $f_1 +f_2+f_3=f$ and $f_1,f_3 \in \{0,1\}$
as follows:
(A) $(1,f-2,1)$;
(B) $(0,f-1,1)$; 
(B') $(1,f-1,0)$; and
(C) $(0,f,0)$.

\begin{corollary} \label{cordegen}
Suppose $g \geq 3$ and $0 \leq f \leq g$.  Let $S$ be an irreducible component of $\calm_g^f$.  
\begin{alphabetize}
\item Then $\bar S$ intersects $\Delta_{1,1}[\bar \calm_g]$.
\item There is a choice of $(f_1,f_2,f_3)$ from cases (A)-(C), 
and there are irreducible components $S_1$ of $\bar\calm_{1;1}^{f_1}$
and $S_2$ of $\bar\calm_{g-2;2}^{f_2}$ 
and $S_3$ of $\bar \calm_{1;1}^{f_3}$;
and there are irreducible components $S_R$ of $\bar\calm_{g-1;1}^{f_2+f_3}$ and $S_L$ of 
$\bar\calm_{g-1;1}^{f_1+f_2}$;
so that the restriction of the clutching morphisms gives a commutative diagram
\begin{equation}
\label{diagclutch}
\xymatrix{
S_1 \cross S_2 \cross S_3 \ar[r]  \ar[d]& 
 S_1 \cross S_R \ar[d]\\
 S_L \cross S_3 \ar[r] & \bar S \cap \Delta_{1,1}
}
\end{equation}
\item Furthermore, case (A) occurs as long as $f \geq 2$, case (B) or (B') occurs as long as $1 \leq f \leq g-1$, 
and case (C) occurs as long as $f \leq g-2$. 
\end{alphabetize}
\end{corollary}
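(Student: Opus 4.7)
The plan is to deduce the entire corollary from Proposition \ref{propchain} applied to the three-piece partition $g = 1 + (g-2) + 1$, together with the fact (visible from diagram \eqref{diagclutchbox}) that the image of the iterated clutching map from $\bar\calm_{1,1} \times \bar\calm_{g-2;2} \times \bar\calm_{1,1}$ into $\bar\calm_g$ is precisely $\Delta_{1,1}$.

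First, for part (c), I would perform a direct combinatorial check. Each proposed triple $(f_1,f_2,f_3)$ must satisfy $f_1,f_3 \in \{0,1\}$, $0 \le f_2 \le g-2$, and $f_1+f_2+f_3=f$. For case (A), the constraint $f_2 = f-2 \ge 0$ forces $f \ge 2$, and the upper bound $f_2 \le g-2$ is automatic; for cases (B) and (B'), the constraints force $1 \le f \le g-1$; for case (C), the constraint is $f \le g-2$. A consequence of the analysis is that at least one case is admissible for every $0 \le f \le g$.

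For part (b), I would fix an admissible triple $(f_1,f_2,f_3)$ and apply Proposition \ref{propchain} with $m=3$ and $(g_1,g_2,g_3)=(1,g-2,1)$. This produces an irreducible component $W$ of $\bar\calm_{1;1}^{f_1} \times \bar\calm_{g-2;2}^{f_2} \times \bar\calm_{1;1}^{f_3}$ whose image under iterated clutching lies in $\bar S$. Since the irreducible components of a product of reduced stacks with finitely many components are exactly the products of components of the factors, $W = S_1 \times S_2 \times S_3$ for irreducible components $S_i$. I then define $S_L$ to be any irreducible component of $\bar\calm_{g-1;1}^{f_1+f_2}$ containing $\kappa_{1;1,g-2;2}(S_1 \times S_2)$, and define $S_R$ analogously; both lie in the claimed $p$-rank strata by $p$-rank additivity \eqref{eqblrprank}. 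Diagram \eqref{diagclutch} commutes as the restriction of \eqref{diagclutchbox} to these components, and its image lies in $\bar S \cap \Delta_{1,1}$ by the description of $\Delta_{1,1}$ above. Part (a) is then immediate: by (c) some admissible case always exists, and by (b) this produces a point of $\bar S \cap \Delta_{1,1}$.

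There is no serious obstacle here; the argument is a direct specialization of Proposition \ref{propchain} to a three-term partition, with bookkeeping of $p$-ranks and components. The one point worth spelling out is the decomposition of $W$ as a product of components of the three factors, which is what allows $S_L$ and $S_R$ to be selected consistently so that the resulting diagram commutes.
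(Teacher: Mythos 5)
Your proposal is correct and takes the same approach as the paper: the paper's proof of Corollary~\ref{cordegen} simply says that all three parts follow immediately from Proposition~\ref{propchain}, which is exactly the specialization to $m=3$, $(g_1,g_2,g_3)=(1,g-2,1)$ that you carry out. You have merely written out the bookkeeping (admissibility of each $(f_1,f_2,f_3)$, the product decomposition of the component $W$, and the choice of $S_L$, $S_R$ via $p$-rank additivity) that the paper leaves implicit.
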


\begin{proof}
All three parts follow immediately from Proposition \ref{propchain}.
\end{proof}

\subsection{Open questions about the geometry of the $p$-rank strata} \label{Squestions}

The phrasing of Corollary \ref{cordegen} is as simple as possible 
given the present lack of information about the number of components
of $\calm_g^f$.
We note that if $\calm_g^f$ is irreducible for some pair $g,f$,
then there are much shorter proofs of Corollaries \ref{Cchainelliptic} and \ref{cordegen}.

\begin{question} \label{Q1}
How many irreducible components does $\calm^f_g$ have?
\end{question}

The answer to Question \ref{Q1} appears to be known for all $p$ only
in the following cases: $\calm^f_g$ is irreducible when $f=g$;
$\calm^f_g$ is irreducible when $g=2$ and $f \geq 1$ and when $g=3$
and $0 \leq f \leq 3$; for $\calm_1^0$ or $\calm_2^0$, the number of
irreducible components 
is a class number associated with the quaternion algebra ramified at
$p$ and $\infty$.  If $g \geq 3$, then $\cala_g^f$ is irreducible \cite[Remark 4.7]{chailadic}.

\begin{question}
\label{Q2}
Let $S$ be an irreducible component of $\calm_g^f$.  
Does the closure of $S$ in $\calm_g$ contain an irreducible component
of $\calm_g^0$?
\end{question}

The analogous property is true for $\cala_g^f$ by
\cite[Remark 4.7]{chailadic}.  If the answer to Question \ref{Q2} is
affirmative for some $g$, then one can reduce to the case $f = 0$ in the proof of
Theorem \ref{Tprankmono}.

\section{Monodromy} \label{Smono}

\subsection{Definition of monodromy}
\label{subsecmono}

Recall the notations about monodromy found in
\cite[Section 3.1]{achterpries07}.  In particular, let $C \ra S$ be a
stable curve of genus $g$ over a connected $k$-scheme $S$.
There is an open subset $S^\circ\subset S$ such that $\pic^0(C)\rest{S^\circ}$
is an abelian scheme; in fact, $S^\circ = S
\cross_{\bar\calm_g}(\bar\calm_g - \Delta_0)$. Assume that $S^\circ$ is nonempty and connected. 
  Let $s\in S^\circ$ be a
geometric point.

Let $\ell$ be a prime distinct from $p$.  For each positive integer $n$, the
fundamental group $\pi_1(S^\circ,s)$ acts linearly on
$\pic^0(C)[\ell^n]_s$ via a representation $\rho_{C \ra S,s,\ell^n}:
\pi_1(S^\circ,s) \ra \aut(\pic^0(C)[\ell^n]_s)$.  The
$\integ/\ell^n$-monodromy of $C \ra S$, written as 
$\mono_{\ell^n}(C \ra S, s)$, is the image of $\rho_{C\ra
  S,s,\ell^n}$.  If $S$ is equipped with a morphism $f:S \ra \calm$
to one of the moduli stacks $\calm$ defined in Section
\ref{subsecmoduli}, let $\mono_{\ell^n}(S,s)$ denote
$\mono_{\ell^n}(f^*\calc \ra S,s)$, where $\calc \ra \calm$ is the
tautological curve.  The isomorphism class of the abstract group
$\mono_{\ell^n}(S,s)$ is independent of $s$, and we denote it by
$\mono_{\ell^n}(S)$.
Let $\mono_{\integ_\ell}(S) = \invlim n
\mono_{\ell^n}(S)$, and let $\mono_{\rat_\ell}(S)$ be the Zariski
closure of $\mono_{\integ_\ell}(S)$ in $\gl_{2g}(\rat_\ell)$.

A priori, $\mono_{\ell^n}(S)
\subset \gl_{2g}(\integ/\ell^n)$.  The principal polarization
$\lambda$ on $\pic^0(C)$ induces a symplectic pairing
$\ang{\cdot,\cdot}_\lambda$ on the $\ell^n$-torsion, with values in
$\mmu_{\ell^n}$.  Therefore, there is an inclusion $\mono_{\ell^n}(S)
\subseteq \gsp_{2g}(\integ/\ell^n)$ of the monodromy group in the group of
symplectic similitudes.  Moreover, since $k$ contains  ${\ell^n}$th
roots of unity, 
$\mono_{\ell^n}(S) \subseteq \sp_{2g}(\integ/\ell^n)$.  
Similarly, $\mono_\Lambda(S)
\subseteq \sp_{2g}(\Lambda)$ if $\Lambda$ is either $\integ_\ell$ or
$\rat_\ell$.
If the monodromy group is the full symplectic group, this has a geometric
interpretation, as follows.
Equip $(\integ/\ell)^{2g}$ with the standard symplectic pairing
$\ang{\cdot,\cdot}_\std$, and let
\begin{equation*}
S_{[\ell]} := \Isom( (\pic^0(C/S)[\ell], \ang{\cdot,\cdot}_\lambda),
((\integ/\ell)_S^{2g},\ang{\cdot,\cdot}_{\std})).
\end{equation*}
For simplicity, assume that $S = S^\circ$, i.e., that all fibers of $C \ra
S$ are of compact type. There is an $\ell$th root of
unity on $S$, so $S_{[\ell]} \ra S$ is an \'etale Galois cover, possibly
disconnected, with covering group $\sp_{2g}(\integ/\ell)$.  The cover
$S_{[\ell]}$ is connected if and only if $\mono_\ell(S) \iso
\sp_{2g}(\integ/\ell)$.

Finally, since the category of \'etale covers of a Deligne-Mumford
stack is a Galois category \cite[Section 4]{noohi}, one can employ the same
formalism to study a relative curve $C$ over a connected stack
$\cals$.  

Let $h: \calt \ra \cals$ be a morphism of connected stacks.  This
morphism is called a fibration \cite[X.1.6]{sga1} \cite[Section
A.4]{noohi} if every choice of base points $t \in \calt$ and $s
\in \cals$ with $h(t) = s$ induces an exact sequence of homotopy
groups
\begin{equation}
\label{diagfibration}
\xymatrix{
\pi_1(\calt_s,t) \ar[r] &
\pi_1(\calt,t) \ar[r]^{h_*} &
\pi_1(\cals,s) \ar[r] &
\pi_0(\calt_s,t) \ar[r] &
\st{1}.
}
\end{equation}
An arbitrary base change of a fibration is a fibration. 
If $h$ is a fibration and if the fiber $\calt_s$ is
connected, then $h_*$ is a surjection of fundamental groups.
Therefore, if $\calc \ra \cals$ is a relative curve, then the
natural inclusion  $\mono_\ell(h^*\calc \ra \calt, t) \ra \mono_\ell(\calc
\ra \cals, s)$ is an isomorphism.

\begin{lemma}
\label{lemfibration}
Let $g$ and $r$ be positive integers.  The morphism $\bar\calm_{g;r} \ra
\bar\calm_g$ is a fibration with connected fibers. 
\end{lemma}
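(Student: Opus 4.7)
The plan is to induct on $r$, using Knudsen's identification \cite[Thm.~2.7]{knudsen2} of $\bar\calm_{g;r+1}$ with the universal stable $r$-pointed curve over $\bar\calm_{g;r}$.  This presents the forgetful morphism $\phi_{g;r}: \bar\calm_{g;r} \ra \bar\calm_g$ as a composition of ``universal curve'' morphisms, reducing the problem to a single step, together with the fact that the composition of two fibrations with connected fibers is again a fibration with connected fibers (by splicing the long exact sequences \eqref{diagfibration}).

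For the base case $r=1$, the morphism $\phi_{g;1}$ is, up to the (empty) labeling data, the tautological curve $\bar\calc_g \ra \bar\calm_g$.  It is proper and flat, and each geometric fiber is a stable curve, hence geometrically connected and geometrically reduced.  The classical homotopy exact sequence for proper morphisms with geometrically connected reduced fibers (SGA 1, Expos\'e X, Cor.~1.4, resting on Stein factorization) then yields precisely \eqref{diagfibration} with $\pi_0$ of the fiber trivial, i.e., the map is a fibration with connected fibers.  For the inductive step, Knudsen's theorem identifies $\bar\calm_{g;r+1}$ with the universal stable $r$-pointed curve over $\bar\calm_{g;r}$; in particular, $\bar\calm_{g;r+1} \ra \bar\calm_{g;r}$ is proper, flat, and has geometrically connected reduced fibers (an $r$-pointed stable curve has connected underlying curve), so the same argument applies.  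Composing with the inductive hypothesis for $\phi_{g;r}$ gives the result for $\phi_{g;r+1}$.

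The main technical point is that the SGA 1 argument must be carried out in the Deligne-Mumford stack setting rather than for schemes.  This is not a substantive obstacle: since every DM stack admits an \'etale scheme atlas, and the \'etale fundamental group of a DM stack is defined via the Galois category of finite \'etale covers as in \cite[Section 4]{noohi}, the usual proof transports without essential modification.  One must only check that Stein factorization and the characterization of finite \'etale covers trivialized by the fiber both extend to this setting, which is routine.
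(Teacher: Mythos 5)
Your proof is correct and follows essentially the same route as the paper: both invoke Knudsen's identification of $\bar\calm_{g;r}$ with the universal curve over $\bar\calm_{g;r-1}$, observe that each one-step projection is proper and flat with connected fibers, and conclude by induction on $r$ together with the fact (via Noohi's appendix, resting ultimately on the SGA~1 homotopy sequence transported to the DM-stack setting) that such morphisms are fibrations. The paper's phrasing collapses the composition step into a single appeal to properness/flatness/surjectivity of $\phi_{g;r}$, whereas you explicitly splice exact sequences, but this is only a cosmetic difference.
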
 

\begin{proof} 
By \cite[p.\ 165]{knudsen2}, $\bar\calm_{g;r}$ is
represented by $\calc_{g;r-1}$, the tautological (stable, labeled) curve
over $\bar \calm_{g;r-1}$.
The projection $\calc_{g;r-1} \ra \bar\calm_{g;r-1}$ is proper, flat
and surjective with connected fibers.  By induction on $r$, $\phi_{g;r}: \bar \calm_{g;r}
\ra \bar \calm_g$ is proper, flat and
surjective, and $\phi_{g;r}$ is a fibration \cite[Section A.1]{noohi}.
\end{proof}

\begin{lemma}
\label{LLfibration}
Let $g$ and $r$ be positive integers.  Suppose $S\subset
\bar\calm_g$ is a connected substack and $s$ is a geometric 
point of $S$.  Let $S_r = S \cross_{\bar\calm_g} \bar\calm_{g;r}$, and
let $s_r$ be a lift of $s$ to $S_r$.  Then $\mono_\ell(S,s) = \mono_\ell(S_r,s_r)$.
\end{lemma}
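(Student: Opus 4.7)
The plan is to exploit the fibration established in Lemma \ref{lemfibration} together with the base-change stability of fibrations and the functoriality of the monodromy representation. By Lemma \ref{lemfibration}, the forgetful morphism $\phi_{g;r}: \bar\calm_{g;r} \to \bar\calm_g$ is a fibration with connected fibers. Since fibrations are preserved under arbitrary base change (as noted in the excerpt), the projection $S_r \to S$ is itself a fibration, and its fibers coincide with those of $\phi_{g;r}$, hence are connected. In particular, $S_r$ is connected.

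Next I would invoke the exact sequence \eqref{diagfibration} applied to $S_r \to S$: because the fiber is connected, the final $\pi_0$ term vanishes and the induced map $\pi_1(S_r, s_r) \to \pi_1(S, s)$ is surjective (as already observed in the text following Lemma \ref{lemfibration}). The same remarks apply after restricting to the open loci $S^\circ$ and $S_r^\circ$ where $\pic^0$ is an abelian scheme, since $S_r^\circ = S^\circ \times_{\bar\calm_g} \bar\calm_{g;r}$ and base change preserves the fibration property and connectivity of fibers.

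The final ingredient is that the tautological curve $\calc_{g;r} \to \bar\calm_{g;r}$ is the pullback of $\calc_g \to \bar\calm_g$ along $\phi_{g;r}$, since forgetting the labeling does not change the underlying curve. Consequently, the monodromy representation $\rho_{S_r, s_r, \ell}$ factors as the composition
\[
\pi_1(S_r^\circ, s_r) \xrightarrow{\;(\phi_{g;r})_*\;} \pi_1(S^\circ, s) \xrightarrow{\;\rho_{S,s,\ell}\;} \aut(\pic^0(\calc_{g,s})[\ell]).
\]
Since the first arrow is surjective by the previous paragraph, the images of the two representations coincide, giving $\mono_\ell(S_r, s_r) = \mono_\ell(S, s)$.

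The main thing to check with care is simply that the open compact-type loci behave well under base change so that the fibration argument transfers from $\bar\calm_{g;r} \to \bar\calm_g$ to $S_r^\circ \to S^\circ$; this is routine given that $\Delta_0[\bar\calm_{g;r}]$ is just the preimage of $\Delta_0[\bar\calm_g]$ under $\phi_{g;r}$. Once this is in place, the proof is a two-line diagram chase.
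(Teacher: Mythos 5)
Your proposal is correct and follows essentially the same route as the paper: both invoke Lemma \ref{lemfibration} to see that $\phi_{g;r}$ is a fibration with connected fibers, pass to $S_r \to S$ by base change, and conclude from the exact sequence \eqref{diagfibration} that $\pi_1(S_r,s_r) \to \pi_1(S,s)$ is surjective, hence the monodromy images agree. Your write-up is somewhat more explicit (spelling out the compatibility of the tautological curves and the compact-type loci), but there is no genuine difference in method.
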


\begin{proof}
By Lemma \ref{lemfibration}, $\bar\calm_{g;r} \ra  \bar\calm_g$ is a fibration with
connected geometric fibers, and thus so is $S_r \ra S$.  The result now
follows from the defining property \eqref{diagfibration} of
fibrations. 
\end{proof}

\subsection{Monodromy of the $p$-rank strata} \label{Smonoresults}

This section contains the main result in the paper, Theorem \ref{Tprankmono}, 
which is about the monodromy of the $p$-rank strata of the moduli space of curves.  
The proof proceeds by induction on the genus of $g$.  
The next lemma, due to Chai, will be used for the base case while Lemma 
\ref{lemclutchmono} will be used for the inductive step. 

\begin{lemma}
\label{lembasecase}
Let $\ell$ be a prime distinct from $p$.  Let $U\subset \cala_g$ be an irreducible
substack which is stable under $\ell$-adic Hecke correspondences and
which is not contained in the supersingular locus.  Let $S$ be an
irreducible component of $\calm_g \cross_{\cala_g} U$.  Suppose that
$\dim(S) = \dim(U)$.  Then $\mono_\ell(S) \iso \sp_{2g}(\integ/\ell)$.
\end{lemma}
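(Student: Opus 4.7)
The plan is to invoke Chai's main monodromy theorem from \cite{chailadic} directly on $U$, and then transfer the conclusion to $S$ using the Torelli morphism. The hypotheses placed on $U$---that $U$ is irreducible, stable under the $\ell$-adic Hecke correspondences, and not contained in the supersingular locus of $\cala_g$---are exactly those needed to apply that theorem, which yields $\mono_\ell(U) \iso \sp_{2g}(\integ/\ell)$.

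I would then transfer this to $S$ via the Torelli morphism $\tau: S \ra U$ induced by $\calm_g \ra \cala_g$. Since $S$ is an irreducible component of $\calm_g \cross_{\cala_g} U$ with $\dim S = \dim U$ and $U$ is irreducible, the image $\tau(S)$ must be dense in $U$, so $\tau$ is dominant. By Torelli's theorem, $\calm_g \ra \cala_g$ is injective on geometric points, so the generic geometric fibre of $\tau$ consists of a single point; in particular, $\tau$ is generically of degree one onto $U$.

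A standard argument then shows that, after restricting to suitable open dense substacks $S^\circ \subseteq S$ and $U^\circ \subseteq U$ on which the universal abelian scheme is defined and on which $\tau$ becomes an isomorphism of stacks, the induced map $\tau_*: \pi_1(S^\circ, s) \ra \pi_1(U^\circ, \tau(s))$ is surjective. Since the monodromy representation of $S$ factors as $\rho_U \comp \tau_*$, where $\rho_U$ is the monodromy representation of $U$, we obtain
\[
\mono_\ell(S) = \rho_U(\tau_* \pi_1(S^\circ)) = \rho_U(\pi_1(U^\circ)) = \mono_\ell(U) \iso \sp_{2g}(\integ/\ell).
\]

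The main technical obstacle lies in establishing surjectivity of $\tau_*$ cleanly in the Deligne-Mumford stack setting. One must be careful along the hyperelliptic locus, where the Torelli morphism is not an immersion of stacks due to the hyperelliptic involution mapping to $-1$ on the Jacobian, and along possible singular loci of $S$ and $U$. These issues are handled by passing to appropriate smooth open dense substacks and invoking the Galois-category formalism for \'etale fundamental groups of Deligne-Mumford stacks recalled in Section \ref{subsecmono}.
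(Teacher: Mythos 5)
The key step in your argument is flawed: you claim that because the Torelli map is injective on geometric points, it has generic degree one onto $U$, and hence after shrinking becomes an isomorphism of stacks. This is false for $g \geq 3$. The Torelli morphism $\calm_g \to \cala_g$ is \emph{not} generically degree one as a map of stacks, because over the non-hyperelliptic locus the automorphism groups jump: for $C$ non-hyperelliptic, $\aut(\pic^0(C),\lambda) = \aut(C) \times \st{\pm 1}$, but $-1$ is not induced by any automorphism of $C$. Consequently, after rigidifying by a level-$N$ structure with $N \geq 3$ to pass to honest schemes, the Torelli map $\tau_{g,[N]}: \calm_{g,[N]} \to \cala_{g,[N]}$ has generic degree $d \in \st{1,2}$ onto its image (this is \cite[Lemma 1.11]{oortsteenbrink}, cited in the paper), with $d = 2$ exactly over the non-hyperelliptic locus. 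So the induced map on $\pi_1$ has image of index $d$, not full image, and you cannot conclude $\mono_\ell(S) = \mono_\ell(U)$ directly. You also have the geometry backwards in your last paragraph: the hyperelliptic locus is precisely where the map \emph{is} a stack immersion (the hyperelliptic involution does induce $-1$), while the trouble is on the non-hyperelliptic locus.

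The gap is repairable, and that is exactly what the paper does: since $\mono_\ell(V_{[N]})$ is a subgroup of $\mono_\ell(U_{[N]}) \iso \sp_{2g}(\integ/\ell)$ whose index divides $d \in \st{1,2}$, and since $\sp_{2g}(\integ/\ell)$ admits no nontrivial quotient isomorphic to $\integ/d$ (hence no proper subgroup of index $d$), the monodromy must be everything. Note also that the paper does not apply Chai's theorem directly to the stack $U$; it applies \cite[Prop.~4.4]{chailadic} to the scheme $U_{[N]} = U \cross_{\cala_g} \cala_{g,[N]}$, which is where irreducibility and the monodromy computation are actually established. Passing to $\cala_{g,[N]}$ and $\calm_{g,[N]}$ is not mere Galois-category bookkeeping: it is needed both to invoke Chai's result in the form it is stated and to make the degree-$d$ count from Oort--Steenbrink meaningful. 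Your proposal elides both of these points.
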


\begin{proof}
For the proof we introduce a fine moduli scheme
$\calm_{g,[N]}$, and then show that any irreducible component of
the pullback of $S$ to $\calm_{g,[N]}$ has full monodromy.

  Fix an  integer $N \ge 3$ relatively prime to $p\ell$ and an
  isomorphism $\mmu_N \iso \integ/N$, and let $\cala_{g,[N]}$ be the
  fine moduli scheme of principally polarized abelian schemes of
  relative dimension $g$ equipped with principal symplectic level-$N$
  structure \cite[p.\ 139]{mumfordgit}.  Let $\calm_{g,[N]} = \calm_g
  \cross_{\cala_g} \cala_{g,[N]}$.  It is the fine moduli scheme of
  smooth proper curves of genus $g$ with principal symplectic
  level-$N$ structure, and the induced Torelli map $\tau_{g,[N]}:
  \calm_{g,[N]} \ra \cala_{g,[N]}$ is analyzed in
  \cite{oortsteenbrink}.

  Let $U_{[N]} = U\cross_{\cala_g} \cala_{g,[N]}$.  By \cite[Prop.\
  4.4]{chailadic}, $U_{[N]}$ is irreducible and $\mono_\ell(U_{[N]})
  \iso \sp_{2g}(\integ/\ell)$.  Let $T_{[N]}$ be any irreducible
  component of $S \cross_{\cala_g} \cala_{g,[N]}$.  By \cite[Lemma
  1.11]{oortsteenbrink}, there exists an open dense subset $V_{[N]}
  \subset T_{[N]}$ and an integer $d \in \st{1,2}$ such that
 $\tau_{g,[N]}\rest{V_{[N]}}$ has degree $d$.   Moreover, if $g \le 2$
  then $d = 1$.  In summary:
\newlength{\jdahack}
\settowidth{\jdahack}{$\subseteq T_{[N]}$}
\begin{equation*}
\xymatrix{
V_{[N]}\subseteq T_{[N]} \ar[r] \ar@<-3.5ex>[d]^{\integ/d}& S \\
U_{[N]} \hspace{\jdahack}
}
\end{equation*}
Since $\dim(U) = \dim(S)$, $\tau_{g,[N]}(V_{[N]})$ is dense in
$U_{[N]}$.
By the above conditions on $d$, $\sp_{2g}(\integ/\ell)$ has no nontrivial
quotient isomorphic to $\integ/d$.
It now
follows that $\mono_\ell(V_{[N]}) \iso \sp_{2g}(\integ/\ell)$
\cite[Lemma 3.3]{achterpries07}.  
Since $\mono_\ell(V_{[N]})$ is as large as
possible, it follows that  $\mono_\ell(S) \iso
\sp_{2g}(\integ/\ell)$.
\end{proof}

\begin{lemma}
\label{lemclutchmono}
Let $g_1$, $g_2$, $r_1$ and $r_2$ be positive integers.   Let $g =
g_1+g_2$, and let $r = r_1+r_2 - 2$.
\begin{alphabetize}
\item There is a
canonical isomorphism of sheaves on $\bar \calm_{g_1;r_1} \cross \bar
\calm_{g_2;r_2}$,
\begin{equation}
\label{eqclutchsheaf}
\kappa_{g_1;r_1,g_2;r_2}^* \pic^0(\calc_{g;r})[\ell] \iso \pic^0(\calc_{g_1;r_1})[\ell]
\cross \pic^0(\calc_{g_2;r_2})[\ell].
\end{equation}

\item For $i = 1,2$, suppose $S_i \subset \bar \calm_{g_i;r_i}$
  is connected, and let $s_i \in S_i^\circ$ be a geometric point.  Let $S
  = \kappa_{g_1;r_1,g_2;r_2}(S_1 \cross S_2)$, and let $s =
  \kappa_{g_1;r_1,g_2;r_2}(s_1\cross s_2)$.  Under the identification
  \eqref{eqclutchsheaf}, 
\begin{equation*}
\mono_\ell(S_1,s_1) \cross \mono_\ell(S_2,s_2) \subseteq
\mono_\ell(S,s) \subset \aut(\pic^0(\calc_{g;r})[\ell]_s).
\end{equation*}
\end{alphabetize}
\end{lemma}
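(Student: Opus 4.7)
For part (a), the plan is to upgrade the pointwise BLR decomposition \eqref{eqblr} to a canonical isomorphism of group schemes over the whole base. For any $k$-scheme $T$ and any $T$-valued point $(s_1,s_2)$ of $\bar\calm_{g_1;r_1}\cross \bar\calm_{g_2;r_2}$ with associated labeled curves $C_1,C_2$ over $T$, the clutched curve $D/T$ is formed by joining $C_1$ and $C_2$ at a single new node, so its dual graph is obtained from those of $C_1$ and $C_2$ by adding a single edge; in particular no new loop is created. By \cite[Ex.\ 9.2.8]{blr} there is then a canonical isomorphism $\pic^0(D)\iso \pic^0(C_1)\cross \pic^0(C_2)$ of semiabelian $T$-schemes. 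Applied with $T = \bar\calm_{g_1;r_1}\cross \bar\calm_{g_2;r_2}$ and $(s_1,s_2)$ the identity, this yields a canonical isomorphism of semiabelian group schemes $\kappa^*\pic^0(\calc_{g;r})\iso \pic^0(\calc_{g_1;r_1})\cross \pic^0(\calc_{g_2;r_2})$; passing to $\ell$-torsion, which is an \'etale sheaf because $\ell\not=p$, gives \eqref{eqclutchsheaf}.

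For part (b), the plan is to combine part (a) with functoriality of monodromy on \'etale fundamental groups. The clutching map $\kappa = \kappa_{g_1;r_1,g_2;r_2}$ is finite \cite[Cor.\ 3.9]{knudsen2} and sends $S_1^\circ \cross S_2^\circ$ into $S^\circ$, since a tree of two compact-type curves is again of compact type. Under the identification of part (a), the pullback along $\kappa$ of the local system $\pic^0(\calc_{g;r})[\ell]$ on $S^\circ$ becomes the direct sum of the pullbacks of $\pic^0(\calc_{g_i;r_i})[\ell]$ along the two projections from $S_1^\circ \cross S_2^\circ$. The action of $\pi_1(S_1^\circ\cross S_2^\circ, s_1\cross s_2)$ on this direct sum factors through its two projections onto $\pi_1(S_i^\circ, s_i)$, so its image in $\aut(\pic^0(\calc_{g;r})[\ell]_s)$ is precisely $\mono_\ell(S_1, s_1)\cross \mono_\ell(S_2, s_2)$. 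Since $\pi_1(S_1^\circ\cross S_2^\circ, s_1\cross s_2)$ maps into $\pi_1(S^\circ, s)$ via $\kappa$, this image is a subgroup of the image $\mono_\ell(S,s)$ of the full representation, yielding the containment.

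The only substantive point is in part (a): one must verify that the pointwise BLR decomposition of $\pic^0$ of a tree of curves arises from a canonical isomorphism of semiabelian group schemes over the base, rather than being defined only fiber by fiber. Once this functoriality is in place, part (b) is a short diagram chase through the naturality of $\pi_1$ and of the associated monodromy representation.
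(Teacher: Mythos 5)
Part (a) is fine and matches the paper's sketch: the cited BLR result is indeed relative, so one gets a canonical isomorphism of semiabelian schemes over the base and then passes to $\ell$-torsion.

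Part (b), however, has a genuine gap, and it is precisely at the direction of containment that the lemma actually needs. You observe that the representation of $\pi_1(S_1^\circ\cross S_2^\circ)$ on $V_1\oplus V_2$ is a direct sum of representations factoring through the two projections $p_{i*}\colon\pi_1(S_1^\circ\cross S_2^\circ)\to\pi_1(S_i^\circ)$, and conclude that the image ``is precisely'' $\mono_\ell(S_1)\cross\mono_\ell(S_2)$. But factoring through the projections only yields the \emph{upper} bound: the image is a subgroup $G\subseteq\mono_\ell(S_1)\cross\mono_\ell(S_2)$. To deduce that $G$ is the full product you would need the map $(p_{1*},p_{2*})\colon\pi_1(S_1^\circ\cross S_2^\circ)\to\pi_1(S_1^\circ)\cross\pi_1(S_2^\circ)$ to be surjective, which is a K\"unneth-type statement for the \'etale fundamental group and is not automatic here since the $S_i^\circ$ need not be proper. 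Without the full product you only get that $G$ surjects onto each factor; by Goursat's lemma $G$ could a priori be a proper ``graph'' subgroup, and then the lemma's conclusion $\mono_\ell(S_1)\cross\mono_\ell(S_2)\subseteq\mono_\ell(S,s)$ would not follow.

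The fix is the standard sections argument. For $j\neq i$, the closed immersion $\iota_i\colon S_i^\circ\to S_1^\circ\cross S_2^\circ$ given by $x\mapsto(x,s_j)$ satisfies $p_i\circ\iota_i=\id$ and $p_j\circ\iota_i=\text{const}$. Hence $\rho(\iota_{i*}\pi_1(S_i^\circ))=\mono_\ell(S_i)\cross\{1\}$ (respectively $\{1\}\cross\mono_\ell(S_i)$) inside $\aut(V_1\oplus V_2)$. The subgroup generated by $\mono_\ell(S_1)\cross\{1\}$ and $\{1\}\cross\mono_\ell(S_2)$ is $\mono_\ell(S_1)\cross\mono_\ell(S_2)$, so this subgroup lies in the image of $\pi_1(S_1^\circ\cross S_2^\circ)$, and pushing forward along $\kappa_*$ gives the desired containment in $\mono_\ell(S,s)$. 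This is likely what the paper means by invoking ``covariance of the fundamental group'' together with a van Kampen--type generation argument; your write-up replaces that generation step with an unjustified surjectivity claim. With the sections inserted, the argument is correct and close in spirit to the cited proof.
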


\begin{proof}
The proof of \cite[Lemma 3.1]{achterpries07} applies here.  The key
point for part (a) 
is that if $C/T$ is the union of two $T$-curves $C_1$ and $C_2$,
identified along a single section of $T$, then there is a canonical
isomorphism $\pic^0(C) \iso \pic^0(C_1) \cross \pic^0(C_2)$
\eqref{eqblr}.  The key ideas for part (b) involve the van Kampen
theorem and covariance of the fundamental group.
\end{proof}

\begin{theorem} \label{Tprankmono}
Let $\ell$ be a prime distinct from $p$ and suppose $g \ge 1$.
Suppose $0 \leq f \leq g$, and $f \not = 0$ if $g \leq 2$.
Let $S$ be an irreducible component of $\calm_g^f$, the $p$-rank $f$
stratum in $\calm_g$.
Then $\mono_\ell(S)\iso \sp_{2g}(\integ/\ell)$ and
$\mono_{\integ_\ell}(S) \iso \sp_{2g}(\integ_\ell)$.
\end{theorem}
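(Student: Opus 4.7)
The plan is induction on $g$. The base cases $g \in \{1,2,3\}$ (with $f \geq 1$ if $g \leq 2$) will be handled by Lemma \ref{lembasecase}, and the inductive step $g \geq 4$ by degeneration to the boundary substack $\Delta_{1,1}$, using the chain structure provided by Corollary \ref{cordegen}.

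For the base cases, note that $\dim \calm_g^f = 2g-3+f$ equals $\dim \cala_g^f$ when $g \leq 3$, and the Torelli morphism $\calm_g \to \cala_g$ is an open immersion (resp.\ has generic degree at most $2$) for $g \leq 2$ (resp.\ $g = 3$) by \cite[Lemma 1.11]{oortsteenbrink}. The stratum $\cala_g^f$ is stable under all $\ell$-adic Hecke correspondences and, by \cite[Remark 4.7]{chailadic} together with the classical description in low genus, is irreducible in each case under consideration; our hypothesis on $(g,f)$ rules out the supersingular locus. Lemma \ref{lembasecase} then applies to any irreducible component $S$ of $\calm_g^f$ and yields $\mono_\ell(S) \iso \sp_{2g}(\integ/\ell)$.

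For the inductive step, assume $g \geq 4$ and let $S$ be an irreducible component of $\calm_g^f$. Corollary \ref{cordegen} produces a triple $(f_1,f_2,f_3)$, elliptic factors $S_1, S_3$, and irreducible components $S_L$ of $\bar\calm_{g-1;1}^{f_1+f_2}$ and $S_R$ of $\bar\calm_{g-1;1}^{f_2+f_3}$, all clutching into $\bar S \cap \Delta_{1,1}$ via the commutative square \eqref{diagclutch}. By Lemmas \ref{lemlabeled}(c) and \ref{LLfibration}, the monodromy of $S_L$ (resp.\ $S_R$) agrees with that of an irreducible component of $\calm_{g-1}^{f_1+f_2}$ (resp.\ $\calm_{g-1}^{f_2+f_3}$). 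Since $g-1 \geq 3$, the inductive hypothesis gives $\mono_\ell(S_L) \iso \mono_\ell(S_R) \iso \sp_{2g-2}(\integ/\ell)$. Applying Lemma \ref{lemclutchmono}(b) along each leg of \eqref{diagclutch} yields two subgroups of $\mono_\ell(\bar S) \subseteq \sp_{2g}(\integ/\ell)$: one contains a copy of $\sp_{2g-2}(\integ/\ell)$ acting on $\pic^0(Y_1 \cup Y_2)[\ell]$ and trivially on $\pic^0(Y_3)[\ell]$, and the other a copy acting on $\pic^0(Y_2 \cup Y_3)[\ell]$ and trivially on $\pic^0(Y_1)[\ell]$. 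The standard fact that two such copies of $\sp_{2g-2}(\integ/\ell)$, attached to transverse rank-$2$ symplectic summands, generate $\sp_{2g}(\integ/\ell)$ (as used at the analogous step of \cite[Sec.\ 4]{achterpries07}) completes the inductive step for the mod-$\ell$ monodromy.

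Finally, the assertion $\mono_{\integ_\ell}(S) \iso \sp_{2g}(\integ_\ell)$ follows from the standard lifting lemma that a closed subgroup of $\sp_{2g}(\integ_\ell)$ surjecting onto $\sp_{2g}(\integ/\ell)$ is all of $\sp_{2g}(\integ_\ell)$. The main obstacle is arranging that the two copies of $\sp_{2g-2}(\integ/\ell)$ live inside the monodromy of \emph{a single} connected substack of $\bar S$ with a common base point, so that the group-theoretic generation step can be invoked; this is precisely what the commutative diagram of Corollary \ref{cordegen}(b) delivers, at the cost of the boundary analysis of Section \ref{Sboundresults}.
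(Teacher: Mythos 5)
Your proposal is correct and follows essentially the same route as the paper: base cases $g \le 3$ via Lemma \ref{lembasecase} applied to an irreducible Hecke-stable, non-supersingular locus of the same dimension, and the inductive step via Corollary \ref{cordegen}, Lemma \ref{LLfibration}, and Lemma \ref{lemclutchmono}(b), with two overlapping copies of $\sp_{2g-2}(\integ/\ell)$ generating $\sp_{2g}(\integ/\ell)$ (the paper makes this last step precise by citing the maximality of $\sp(V_1\oplus V_2)\times\sp(V_3)$ from \cite[Thm.\ 3.2]{king81}). The only cosmetic difference is that you take $U=\cala_g^f$ directly in the base case, whereas the paper takes the open dense Newton polygon stratum $U_g^f\subset\cala_g^f$; both are Hecke-stable and irreducible in the relevant cases, so this is not a substantive divergence.
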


\begin{proof}
For group-theoretic reasons (see Lemma \ref{lemintegmono}), it
suffices to show that $\mono_\ell(S) \iso \sp_{2g}(\integ/\ell)$;
for topological reasons, it suffices to show that $\mono_\ell(\bar
S)\iso \sp_{2g}(\integ/\ell)$.

The proof is by induction on $g$.  For the base cases, suppose $1 \leq
g \leq 3$ and $f\not = 0$ if $g \le 2$.  There is a Newton polygon
$\nu_g^f$ such that the locus $U^f_g$, consisting of principally
polarized abelian varieties whose Newton polygon is $\nu_g^f$, is open
and dense in $\cala_{g}^f$ \cite[Thm.\ 2.1]{oorttexelnp}.  The locus
$U^f_g$ is stable under Hecke correspondences.  Since
$f\not = 0$ if $g\le 2$, $\nu_g^f$ is not supersingular.  By
\cite[Rem.\ 4.7]{chailadic}, $U_g^f$ is irreducible. (Alternatively,
see \cite[Ex.\ 11.6]{evdg}.)
Let $S$ be any irreducible
component of $\calm_g^f$.  Then $\dim (S) = \dim (U)$ \cite[Thm.\
2.3]{FVdG:complete}; by Lemma \ref{lembasecase}, $\mono_\ell(S) \iso
\sp_{2g}(\integ/\ell)$. 

Now suppose $g \ge 4$ and $0 \le f \le g$.  As an inductive hypothesis
assume, for all pairs $(g',f')$ where $g' \ge 3$ and $0 \le f' \le g'
< g$, that $\mono_\ell(S') \iso \sp_{2g'}(\integ/\ell)$ for every
irreducible component $S'$ of $\calm_{g'}^{f'}$.

Let $S$ be an irreducible component of $\calm_g^f$.  By Corollary
\ref{cordegen}, $\overline{S}$ intersects $\Delta_{1,1}$
and there is a diagram as in \eqref{diagclutch}.  
Specifically, there 
is a partition $f = f_1+f_2+f_3$, and there are irreducible components $S_1
\subset \bar \calm^{f_1}_{1,1}$ and $S_2 \subset \bar
\calm^{f_2}_{g-2,2}$ and $S_3 \subset \bar \calm^{f_3}_{1,1}$, so that
the clutching maps in \eqref{diagclutchbox} restrict to 
\begin{equation}
\xymatrix{
S_1 \cross S_2 \cross S_3 \ar[r]  \ar[d]& 
 S_1 \cross S_R \ar[d]\\
 S_L \cross S_3 \ar[r] & \bar S \cap \Delta_{1,1}
}
\end{equation}

In particular, $S_L$ is the irreducible component of 
$\bar\calm_{g-1,1}^{f_1+f_2}$
which contains $\kappa_{1;1,g-2;1}(S_1\cross S_2)$, and $S_R$ is the
irreducible component of $\bar\calm_{g-1,1}^{f_2+f_3}$ which contains
$\kappa_{g-2;1,1;1}(S_2\cross S_3)$.  Since $g-1 \ge 3$, the inductive
hypothesis and Lemma  \ref{LLfibration} imply that $\mono_\ell(S_L) \iso \mono_\ell(S_R) \iso \sp_{2(g-1)}(\integ/\ell)$.  

Choose a base point $(s_1,s_2,s_3) \in (S_1^\circ\cross S_2^\circ \cross
S_3^\circ)(k)$, and let $s = \kappa(s_1,s_2,s_3)$.  Write $V =
\pic^0(\calc_g)[\ell]_s$ and, for $1 \leq i \leq 3$, let $V_i =
\pic^0(\calc_{g_i})[\ell]_{s_i}$.  Each of these is a
$\integ/\ell$-vector space equipped with a symplectic form.  There is
an isomorphism of symplectic $\integ/\ell$-vector spaces $V \iso V_1
\oplus V_2 \oplus V_3$ by equation \eqref{eqclutchsheaf}.  By
Lemma \ref{lemclutchmono}(b),
$\mono_\ell(S,s)$ contains $\sp(V_1\oplus V_2)\cross
\mono_\ell(S_3,s_3)$ and $\mono_\ell(S_1,s_1)\cross \sp(V_2\oplus
V_3)$.  Since $\sp(V_1\oplus V_2) \cross \sp(V_3)$ is a maximal 
subgroup of $\sp(V)$ \cite[Thm.\ 3.2]{king81}, $\mono_\ell(S,s) \iso
\sp(V) \iso \sp_{2g}(\integ/\ell)$ .
\end{proof}

\begin{remark}
The assertion of Theorem \ref{Tprankmono} is false for
$\calm^0_1$ and $\calm_2^0$ if $\ell \ge 5$.  Indeed, a
curve of genus $g\le 2$ and $p$-rank $0$ is supersingular. 
Since a
supersingular $p$-divisible group over a scheme $S$ becomes trivial
after a finite pullback $\til S \ra S$, the monodromy group
$\mono_{\integ_\ell}(\calm^0_g)$ is finite.  In view of Lemma
\ref{lemintegmono}, $\mono_\ell(\calm^0_1)$ and $\mono_\ell(\calm^0_2)$ cannot be the full symplectic group.
\end{remark}

\begin{remark}
One might wonder whether it is possible to simplify the proof of
Theorem \ref{Tprankmono}
using the intersection of $\bar S$ with both $\Delta_1$ and $\Delta_2$
rather than with $\Delta_{1,1}$.   
This does not work for the case of $\calm_4^0$.
The intersection of $\bar \calm_4^0$ with $\Delta_2$ involves curves in $\calm_2^0$, and the components of 
$\calm_2^0$ do not have full monodromy group $\sp_4(\integ/\ell)$.
\end{remark}

\subsection{The $p$-adic monodromy of the $p$-rank strata of curves}
\label{subsectionpadicmono}

In this section we calculate the $p$-adic (as opposed to the
$\ell$-adic) monodromy of components of the $p$-rank strata $\calm_g^f$.

Let $S$ be a scheme of characteristic $p$, and let $X \ra S$ be an abelian
scheme with constant $p$-rank $f$.  The group scheme
$X[p]$ admits a largest \'etale quotient $X[p]^\et$ (see, e.g, \cite[Cor.\ 1.7]{oortzink}).  
Similarly, there is a largest \'etale
quotient $X[p^\infty]^\et$ of the $p$-divisible group $X[p^\infty]$.  Let $s$ be a geometric point of $S$.
Then $X[p]^\et \ra S$ is classified by a homomorphism $\pi_1(S,s) \ra
\aut(X[p]^\et)_s \iso \gl_f(\integ/p)$, whose image is denoted
$\mono_p(X \ra S)$, the physical $p$-monodromy of $X \ra S$.
Similarly, to $X[p^\infty]^\et \ra S$ corresponds a representation
$\pi_1(S,s) \ra \aut(X[p^\infty]^\et)_s \iso \gl_f(\integ_p)$, whose
image is denoted $\mono_{\integ_p}(X \ra S)$.    

\begin{proposition}
\label{proppadic}
Suppose $g\ge 2$ and $1 \le f \le g$.  Let $S$ be an irreducible
component of $\calm_g^f$.  Then $\mono_p(S) \iso \gl_f(\integ/p)$ and
$\mono_{\integ_p}(S) \iso \gl_f(\integ_p)$.
\end{proposition}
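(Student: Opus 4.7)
The plan is to mirror the inductive proof of Theorem \ref{Tprankmono}, working throughout with the largest étale quotient of the $p$-divisible group in place of $\ell$-torsion and replacing the symplectic group with $\gl_f$. I would establish the $\integ_p$ statement directly; the mod-$p$ assertion then follows by reduction. Two collections of base cases seed the induction: (a) for the ordinary stratum $f=g$ and any $g\ge 2$, the identification $\mono_{\integ_p}(\calm_g^g)\iso\gl_g(\integ_p)$ is classical (Ekedahl; equivalently, via Faltings--Chai for the ordinary locus of $\cala_g$ combined with the Torelli morphism); and (b) for $g\in\{2,3\}$ and $1\le f\le g-1$, the dimension equality $\dim(\calm_g^f)=\dim(\cala_g^f)=2g-3+f$ allows the physical-$p$ analog of \cite[Prop.\ 4.4]{chailadic}---applied to the Hecke-stable, non-supersingular stratum $\cala_g^f$---to be transferred through the Torelli morphism by the argument of Lemma \ref{lembasecase}.

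For the inductive step, let $g\ge 4$ and $1\le f\le g-1$, and assume the result for all $(g',f')$ with $2\le g'<g$ and $1\le f'\le g'$. Let $S$ be an irreducible component of $\calm_g^f$. I would apply case (B) of Corollary \ref{cordegen} with partition $(f_1,f_2,f_3)=(0,f-1,1)$ (available because $1\le f\le g-1$), which embeds an irreducible component $S_R\subset\bar\calm_{g-1;1}^f$ into $\bar S\cap\Delta_{1,1}$ via clutching against a supersingular $S_1\subset\bar\calm_{1;1}^0$. The $p$-adic analog of Lemma \ref{lemclutchmono} is immediate: for a compact-type clutched curve, \eqref{eqblr} gives $\pic^0(D)\iso\pic^0(C_1)\cross\pic^0(C_2)$, and passing to the largest étale quotient of the $p$-divisible group is exact on this product, so \eqref{eqclutchsheaf} holds verbatim with $[\ell]$ replaced by $[p^\infty]^\et$. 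Since $f_1=0$ forces $\mono_{\integ_p}(S_1)$ to be trivial, the $p$-adic Lemma \ref{lemclutchmono}(b), together with Lemma \ref{LLfibration} and the inductive hypothesis applied to $(g-1,f)$, gives
\[
\mono_{\integ_p}(S)\supseteq\mono_{\integ_p}(S_1)\cross\mono_{\integ_p}(S_R)=\{1\}\cross\gl_f(\integ_p)=\gl_f(\integ_p),
\]
and the reverse inclusion is automatic.

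The main obstacle lies in the base cases, since the paper states Lemma \ref{lembasecase} only in the $\ell$-adic setting: one must invoke the physical-$p$ monodromy theorem of Chai for $\cala_g^f$, verify that the determinant character (governed here by a Hasse--Witt/cyclotomic character rather than being automatically trivial as in the $\ell$-adic case) does surject onto $\integ_p^\times$, and confirm that the degree $d\in\{1,2\}$ Torelli phenomenon does not obstruct surjectivity onto $\gl_f(\integ_p)$; this is fine for $g=2$ (where $d=1$) and for $g=3$ (where $d=1$ generically on each component of $\calm_3^f$ because the generic curve is non-hyperelliptic). The inductive step itself is much cleaner than its counterpart in Theorem \ref{Tprankmono}: case (B) of Corollary \ref{cordegen} places a full-monodromy family $S_R$ directly inside $\bar S\cap\Delta_{1,1}$, so the ``two Levis generate'' group-theoretic input needed for the symplectic case is not required here.
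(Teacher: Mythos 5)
Your proof is plausible in outline, but it takes a genuinely different and considerably more elaborate route than the paper's, and it ends up depending on ingredients that the paper avoids entirely. The paper's proof is a single degeneration step, with no induction at all: for $f=g$ one invokes Ekedahl directly, and for $f<g$ one applies Proposition~\ref{PinterDelta1}(b) to find inside $\bar S$ the image under $\kappa_{f;1,g-f;1}$ of a product $\calm_{f;1}^f\cross T$ with $T\subset\calm_{g-f;1}^0$. Passing to maximal \'etale quotients of the $p$-divisible group kills the $T$-factor entirely (its $p$-rank is zero), so the $p$-adic clutching lemma immediately gives $\mono_{\integ_p}(S)\supseteq\mono_{\integ_p}(\calm_{f;1}^f)\iso\gl_f(\integ_p)$, which is already the whole target group. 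In other words, the key observation is that the group $\gl_f(\integ_p)$ is realized wholesale on a \emph{single} boundary piece, the ordinary locus in genus $f$, so there is no need for an inductive hypothesis, no need to reach $\Delta_{1,1}$, and no need to transport Chai's Hecke-orbit theorem to the physical-$p$ setting.

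Your approach instead mirrors the full inductive machinery of Theorem~\ref{Tprankmono}, which creates exactly the difficulty you flag yourself: the base cases $g\in\{2,3\}$, $1\le f\le g-1$ require a $p$-adic analogue of Lemma~\ref{lembasecase}, which requires a physical-$p$ version of \cite[Prop.\ 4.4]{chailadic} for $\cala_g^f$ and a check that the determinant character surjects onto $\integ_p^\times$. Such a result does exist (the paper points to \cite{yuirredmono} in the remark following the proposition), so the argument can probably be pushed through, but it imports substantially heavier input than necessary. You also rightly notice that the ``two Levis generate $\sp$'' step is unnecessary here, since a single factor already gives all of $\gl_f$; the paper exploits this observation more fully by dispensing with the induction altogether and degenerating in one step to a chain of two (rather than three) curves via Proposition~\ref{PinterDelta1}(b) rather than $\Delta_{1,1}$. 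So: correct in spirit, sound in its inductive step, but strictly more complicated and resting on a base-case input that the paper's argument shows you never need.
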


Note that this statement is not a perfect analogue of Theorem
\ref{Tprankmono}, since it only analyzes the maximal \'etale quotient
of the $p$-divisible group of the Jacobian of the tautological curve over $S$.

\begin{proof}
If $g = f$, this is \cite[Thm.\ 2.1]{ekedahlmono}.  Otherwise, suppose
$f < g$.  By Proposition \ref{PinterDelta1}(b), there is an irreducible
component $T \subset \calm_{g-f;1}^0$ such that $\bar S$ contains the
image $Z$ of $\calm_{f;1}^f \cross T$ under
$\kappa_{f;1,g-f;1}$.  Since $\mono_p(Z)$ is isomorphic to
$\gl_f(\integ/p)$, which is as large as possible, $\mono_p(\bar S)
\iso \mono_p(S) \iso \gl_f(\integ/p)$.  Similarly,
$\mono_{\integ_p}(S) \iso \mono_{\integ_p}(Z) \iso \gl_f(\integ_p)$.
\end{proof}

\begin{remark}
In \cite{yuirredmono}, the author proves the analogue of Proposition
\ref{proppadic} for the $p$-rank strata $\cala_g^f$.  From this, he
deduces the irreducibility of a certain Igusa variety, and is able to
analyze the structure of the moduli space of abelian varieties with
specified parahoric level-$p$ structure.  The analogous statements
(e.g., \cite[Thm.\ 4.1]{yuirredmono})
hold for the moduli space of curves with parahoric level-$p$
structure, provided one replaces $\cala_g^f$ with an irreducible
component of $\calm_g^f$.
\end{remark}

\section{Arithmetic applications}
\label{secapp}

The results of the previous section about the monodromy of components of the moduli space 
of curves of genus $g$ and $p$-rank $f$ have arithmetic applications involving curves over finite fields.
Specifically, they imply
the existence of curves of a given type with trivial automorphism group
(Application \ref{apptrivaut}) or absolutely simple Jacobian
(Application \ref{appabssimp}).  Moreover, they give estimates for the
proportion of such curves with a rational point of order $\ell$ on the
Jacobian (Application  \ref{appclass}) or for which the numerator of the
zeta function has large splitting field (Application \ref{appzeta}).

For these applications, it is necessary to work over a finite
field $\ff$, as opposed to its algebraic closure.  
Throughout this section, let $\ff$ be
a finite field of characteristic $p$ and cardinality $q$, and let
$\bar\ff$ be an algebraic closure of $\ff$.
Let $\ell$ be a prime distinct from $p$.

In this section we redefine 
$\calm_g$ as the Deligne-Mumford stack of smooth projective curves of
genus $g$ fibered over the category of $\ff_p$-schemes.
 
Section \ref{subsecarithmono} contains some results on arithmetic
monodromy groups, and recalls a rigidifying structure which allows one to
pass between moduli stacks and moduli schemes.  In Sections
\ref{subsecappexist} and \ref{subsecappcount}, we apply these
techniques to deduce consequences for 
curves over finite fields.

\subsection{Arithmetic monodromy and tricanonical structures}
\label{subsecarithmono}

Let $\Lambda$ be either $\integ_\ell$, $\rat_\ell$ or $\integ/\ell^n$
for some positive integer $n$.  If $\pi: C \ra S /\ff$ is a smooth
connected proper relative curve, its geometric $\Lambda$-monodromy
group is $\mono^\geom_\Lambda(C\ra S) = \mono_\Lambda(C_{\bar\ff} \ra
S_{\bar\ff})$.  It is naturally a subgroup of the (arithmetic)
monodromy group $\mono_\Lambda(S)$.  If the fibers of $\pi$ have genus
$g$ one has $\mono_\Lambda(S) \subseteq
\gsp_{2g}(\Lambda)$, and
\begin{equation}
\label{eqmonogeom}
\mono_{\integ_\ell}(S)/\mono^\geom_{\integ_\ell}(S) \iso \gal( 
 \ff(\mmu_{\ell^\infty}(\bar \ff))/\ff).
\end{equation}

\begin{lemma}
\label{lemintegmono}
  Let $C \ra S / \ff$ be a smooth connected proper relative curve of genus $g
  \ge 2$ over a geometrically connected base.  If $\mono^\geom_\ell(S) \iso
  \sp_{2g}(\integ/\ell)$, then $\mono^\geom_{\integ_\ell}(S) \iso
  \sp_{2g}(\integ_\ell)$; $\mono_{\integ_\ell}(S)$ has finite index in
  $\gsp_{2g}(\integ_\ell)$; and $\mono^\geom_{\rat_\ell}(S) \iso
  \sp_{2g}(\rat_\ell)$.
\end{lemma}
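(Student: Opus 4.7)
The plan is to derive the three conclusions from the hypothesis $\mono^\geom_\ell(S)\iso\sp_{2g}(\integ/\ell)$ by combining three standard ingredients: a Lie-algebraic lifting argument inside $\sp_{2g}(\integ_\ell)$, the cyclotomic description of the arithmetic-versus-geometric monodromy quotient recorded in \eqref{eqmonogeom}, and Zariski density.

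First I would prove the $\integ_\ell$-statement. Since $\bar\ff$ contains all $\ell$-power roots of unity, the discussion in Section \ref{subsecmono} shows that $\mono^\geom_{\integ_\ell}(S)$ is a closed subgroup of $\sp_{2g}(\integ_\ell)$, and by hypothesis its reduction modulo $\ell$ is the whole of $\sp_{2g}(\integ/\ell)$.  The core step is the well-known lifting lemma that any closed subgroup of $\sp_{2g}(\integ_\ell)$ which surjects onto $\sp_{2g}(\integ/\ell)$ must be all of $\sp_{2g}(\integ_\ell)$.  This is verified by the standard commutator argument along the congruence filtration $\sp_{2g}(\integ_\ell)\supset\Gamma_1\supset\Gamma_2\supset\cdots$, in which each graded piece $\Gamma_i/\Gamma_{i+1}$ is identified with the symplectic Lie algebra $\mathfrak{sp}_{2g}(\ff_\ell)$; perfectness of this Lie algebra for $g\ge 2$ forces each successive graded piece to lie in the image.

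Given the first claim, the $\rat_\ell$-statement is immediate: $\mono^\geom_{\rat_\ell}(S)$ is by definition the Zariski closure of $\mono^\geom_{\integ_\ell}(S)=\sp_{2g}(\integ_\ell)$ in $\gl_{2g}(\rat_\ell)$, and $\sp_{2g}(\integ_\ell)$ is Zariski dense in the algebraic group $\sp_{2g}$ over $\rat_\ell$.

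For the finite-index claim inside $\gsp_{2g}(\integ_\ell)$, I would combine the first claim with \eqref{eqmonogeom} and the similitude exact sequence
\[
1\ra\sp_{2g}(\integ_\ell)\ra\gsp_{2g}(\integ_\ell)\ra\integ_\ell^\times\ra 1.
\]
By the first claim, $\mono_{\integ_\ell}(S)$ contains the kernel $\sp_{2g}(\integ_\ell)$; by \eqref{eqmonogeom} its image in the quotient $\integ_\ell^\times$ along the similitude character (which, via the Weil pairing, agrees with the cyclotomic character) is the open subgroup $\gal(\ff(\mmu_{\ell^\infty}(\bar\ff))/\ff)$.  Since an open subgroup of $\integ_\ell^\times$ has finite index, the snake/five-lemma in the resulting ladder of exact sequences gives $[\gsp_{2g}(\integ_\ell):\mono_{\integ_\ell}(S)]<\infty$.

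The main obstacle is the lifting lemma used in the first step.  For $\ell\ge 5$ this is a routine Nakayama/commutator argument in the pro-$\ell$ group $\Gamma_1$; handling small $\ell$ requires separately verifying perfectness of $\mathfrak{sp}_{2g}(\ff_\ell)$ for $g\ge 2$ in small residue characteristic, which is standard and may be cited from the literature on $\ell$-adic Lie groups.
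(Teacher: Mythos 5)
Your proposal is correct and follows essentially the same route as the paper: the paper also reduces the $\integ_\ell$-claim to the group-theoretic lifting fact that a closed subgroup of $\sp_{2g}(\integ_\ell)$ surjecting onto $\sp_{2g}(\integ/\ell)$ must be all of $\sp_{2g}(\integ_\ell)$, citing Vasiu [Thm.\ 1.3] rather than sketching the congruence-filtration argument, and then obtains the finite-index and $\rat_\ell$ assertions from \eqref{eqmonogeom} and the definition of $\mono_{\rat_\ell}$ exactly as you do. One small caveat: in your sketch of the lifting step, the relevant structural input is really the nonsplitness of $\sp_{2g}(\integ/\ell^2)\ra\sp_{2g}(\integ/\ell)$ together with the irreducibility (under conjugation) of $\mathfrak{sp}_{2g}(\ff_\ell)$ and surjectivity of the Lie bracket across the graded pieces, rather than ``perfectness of $\mathfrak{sp}_{2g}(\ff_\ell)$'' per se; but since you defer the details to the literature this does not affect the overall correctness of the argument.
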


\begin{proof}
Since $\mono^\geom_{\integ_\ell}(S)$ is a closed subgroup of
$\sp_{2g}(\integ_\ell)$, and since $\mono^\geom_{\integ_\ell}(S)
\inject \sp_{2g}(\integ_\ell) \ra\sp_{2g}(\integ/\ell)$ is surjective,
it follows that $\mono^\geom_{\integ_\ell}(S) \iso
\sp_{2g}(\integ_\ell)$ for group-theoretic reasons
\cite[Thm.\ 1.3]{vasiusurj}.  The remainder of the lemma follows from
equation \eqref{eqmonogeom} and the definition of
$\mono_{\rat_\ell}$.
\end{proof}

 In Sections \ref{subsecappexist} and \ref{subsecappcount}, we use
 Chebotarev arguments to deduce various applications about curves over
 finite fields.  At present these tools are only available for
 families of curves over schemes, as opposed to stacks.  To surmount
 this, we  consider rigidifying data whose corresponding moduli
 problems are representable by schemes.  By choosing the data
 $\calm_{g,3K}$ of a tricanonical structure which exists
 Zariski-locally on the base, as opposed to a Jacobi level structure
 which only exists \'etale-locally on the base, we can relate point
 counts on $\calm_{g,3K}(\ff)$ to those on $\calm_g(\ff)$.

Suppose $g \ge 2$.
 The canonical bundle $\Omega_{C/S}$ is ample, and $\Omega^{\tensor
   3}_{C/S}$ is very ample.  Let $N(g) = 5g-5$.  Then
 $\pi_*(\Omega_{C/S}^{\tensor 3})$ is a locally free $\calo_S$-module of
 rank $N(g)$, and sections of this bundle define a closed embedding
 $C\inject \proj^{N(g)}_S$.  A tricanonical ($3K$) structure on $C
 \ra S$ is a choice of isomorphism $\calo_S^{\oplus N(g)} \iso
 \pi_*(\Omega_{C/S}^{\tensor 3})$; let $\calm_{g,3K}$ be the moduli space
 of smooth curves of genus $g$ equipped with a $3K$-structure.  A curve
 with $3K$-structure admits no nontrivial automorphisms, and
 $\calm_{g,3K}$ is representable by a scheme \cite[10.6.5]{katzsarnak},
 \cite[Prop. 5.1]{mumfordgit}.  Moreover, $\calm_g$ may be constructed
 as the quotient of $\calm_{g,3K}$ by $\gl_{N(g)}$, so that the
 forgetful functor
 $\psi_g:\calm_{g,3K} \ra \calm_g$ is open \cite[p.\ 6]{mumfordgit} and a
 fibration \cite[Thm.\ A.12]{noohi}.   

 \begin{lemma}
\label{lem3kfibration}
 Let $S\subset \calm_g$ be a connected substack, and
 let $S_{3K} = S \cross_{\calm_g} \calm_{g,3K}$.  Then
 $\mono_{\ell}(S_{3K}) \iso \mono_{\ell}(S)$.
 \end{lemma}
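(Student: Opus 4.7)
The plan is to imitate the proof of Lemma \ref{LLfibration}, replacing $\bar\calm_{g;r} \to \bar\calm_g$ by $\psi_g: \calm_{g,3K} \to \calm_g$. The excerpt already gives us two essential inputs: $\psi_g$ is a fibration (by \cite[Thm.~A.12]{noohi}) and $\calm_g$ is realized as the stack quotient $[\calm_{g,3K}/\gl_{N(g)}]$. The class of fibrations is stable under arbitrary base change, so the projection $S_{3K} = S \times_{\calm_g} \calm_{g,3K} \to S$ is again a fibration.

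The heart of the argument is to verify that the geometric fibers of $\psi_g$ are connected. Since $\psi_g$ presents $\calm_g$ as the stack quotient of $\calm_{g,3K}$ by $\gl_{N(g)}$, the geometric fiber over a point $[C]$ of $\calm_g$ is (the scheme quotient of) $\gl_{N(g)}$ by the action of $\aut(C)$. Because $\gl_{N(g)}$ is connected and the quotient of a connected scheme by a finite group is connected, the fibers of $\psi_g$ (and hence of $S_{3K} \to S$, by base change) are geometrically connected.

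Choose a geometric point $s_{3K}$ of $S_{3K}^\circ$ lying over a geometric point $s$ of $S^\circ$. The defining property \eqref{diagfibration} of a fibration then gives the exact sequence
\begin{equation*}
\pi_1((S_{3K})_s, s_{3K}) \to \pi_1(S_{3K}, s_{3K}) \to \pi_1(S, s) \to \pi_0((S_{3K})_s, s_{3K}) \to \{1\},
\end{equation*}
and connectedness of the fiber forces $\pi_1(S_{3K}, s_{3K}) \to \pi_1(S, s)$ to be surjective.

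Finally, the tautological curve over $S_{3K}$ is simply the pullback of the tautological curve over $S$, so the representation $\rho_{S_{3K}, s_{3K}, \ell}$ factors as $\pi_1(S_{3K}, s_{3K}) \to \pi_1(S, s) \xrightarrow{\rho_{S,s,\ell}} \aut(\pic^0(\calc_g)[\ell]_s)$. Since the first arrow is surjective, the two representations have the same image, yielding $\mono_\ell(S_{3K}) \iso \mono_\ell(S)$. The only potentially subtle point is the connectedness of the geometric fibers of $\psi_g$, and that is immediate once one invokes the stack-quotient presentation of $\calm_g$; the rest is a formal consequence of the fibration property, exactly as in Lemma \ref{LLfibration}.
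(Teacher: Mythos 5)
Your proof follows the paper's own argument exactly: $\psi_g$ is a fibration, so its base change $S_{3K}\to S$ is one too; the geometric fibers are connected; hence $\pi_1(S_{3K})\to\pi_1(S)$ is surjective by the exact sequence \eqref{diagfibration}, and the monodromy images agree. The only thing you add is a justification of the fiber connectedness, which the paper simply asserts, and that justification contains a small inaccuracy. The geometric fiber of $\psi_g$ over a point of $\calm_g$ --- meaning the $2$-fiber product of stacks with $\spec k'\to\calm_g$ classifying a curve $C$ --- is the scheme $\Isom(\calo^{\oplus N(g)}, \pi_*\Omega_{C}^{\tensor 3})$ of tricanonical structures on $C$, which is a $\gl_{N(g)}$-torsor and hence isomorphic to $\gl_{N(g)}$ over the algebraically closed residue field. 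It is \emph{not} the quotient $\gl_{N(g)}/\aut(C)$; that would be the fiber over the corresponding point of the coarse moduli space, where the automorphisms get collapsed. In the stacky $2$-fiber product the automorphism group is absorbed into the identifications rather than acting as a quotient group. Either description is connected, so your conclusion is unaffected, but the torsor picture is the one that is actually being used.
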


 \begin{proof}
 Since $\calm_{g,3K} \ra \calm_g$ is a fibration, so is $S_{3K} \ra
 S$.  Fix a base point $s_{3K} \in S_{3K}(\bar\ff)$, and let $s =
 \psi_g(s_{3K})$.  The fiber $S_{3K,s}$ is connected; by the exact
 sequence \eqref{diagfibration}, the induced
 homomorphism $\psi_{g*}: \pi_1(S_{3K},s_{3K}) \ra \pi_1(S,s)$ is
 surjective.
 As in Lemma \ref{LLfibration}, this implies
 $\mono_\ell(S_{3K}) \iso \mono_\ell(S)$.
 \end{proof}

\subsection{Existence applications: trivial automorphism group and
simple Jacobian}
\label{subsecappexist}

In this section, we show there exist curves of genus $g$ and $p$-rank
$f$ with trivial automorphism group and absolutely simple Jacobian
using the $\rat_\ell$-monodromy of $\calm_g^f$. 

\begin{lemma}
\label{lemtrivaut}
Let $S\subset \calm_g$ be a geometrically connected substack such that
$\mono^\geom_\ell(S) \iso \sp_{2g}(\integ/\ell)$ for all $\ell$ in a
set of density one.  Then
there exists $s\in S(\bar \ff)$ such that $\aut_{\bar\ff}(\calc_{g,s})$ is
either trivial or is generated by a hyperelliptic involution.
\end{lemma}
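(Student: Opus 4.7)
The strategy is a proof by contradiction using the centralizer of the monodromy representation. First, pull back via the tricanonical cover $S_{3K}:=S \cross_{\calm_g} \calm_{g,3K}$ introduced in Section~\ref{subsecarithmono}; by Lemma~\ref{lem3kfibration} we have $\mono^\geom_\ell(S_{3K}) \iso \mono^\geom_\ell(S)$, and any $\bar\ff$-point of $S_{3K}$ satisfying the conclusion descends to one on $S$. We may therefore assume $S = S_{3K}$ is a scheme and that the tautological curve $\calc_S \ra S$ is a genuine family of schemes.

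Suppose for contradiction that for every $s \in S(\bar\ff)$, the group $\aut_{\bar\ff}(\calc_{g,s})$ strictly contains $\st{\id,\iota_s}$, where $\iota_s$ is the hyperelliptic involution of $\calc_{g,s}$ if $\calc_{g,s}$ is hyperelliptic and otherwise $\iota_s$ is omitted.  Because $g \ge 2$, the automorphism scheme $\underline{\aut}(\calc_S/S)$ is finite and unramified over $S$.  Let $V$ be its complement of the identity section together with the (partially defined) hyperelliptic-involution section.  The hypothesis makes $V \ra S$ surjective, so we may choose an irreducible component $S' \subset V$ that dominates $S$.  Over $S'$ the tautological section gives a family automorphism $\sigma:\calc_{S'} \ra \calc_{S'}$ whose specialization at every geometric point is neither $\id$ nor the hyperelliptic involution.

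Because $S' \ra S$ is finite and dominant, the image of $\pi_1(S'_{\bar\ff})$ in $\pi_1(S_{\bar\ff})$ has finite index.  Pick any $\ell$ from the density-one set with $\mono^\geom_\ell(S) \iso \sp_{2g}(\integ/\ell)$; Lemma~\ref{lemintegmono} upgrades this to $\mono^\geom_{\rat_\ell}(S) \iso \sp_{2g}(\rat_\ell)$, and since $\sp_{2g}$ is connected, any finite-index subgroup of $\sp_{2g}(\rat_\ell)$ is Zariski dense, so $\mono^\geom_{\rat_\ell}(S')$ has Zariski closure $\sp_{2g}(\rat_\ell)$ as well.  By functoriality the endomorphism induced by $\sigma$ on the Tate module $V_\ell\pic^0(\calc_{s'})$ commutes with the image of $\pi_1(S'_{\bar\ff})$, hence lies in the centralizer of $\sp_{2g}(\rat_\ell)$ in $\gl_{2g}(\rat_\ell)$, namely the scalars $\rat_\ell\units$.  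Preservation of the Weil pairing forces this scalar to be $\pm 1$.

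The embedding $\End(\pic^0(\calc_{s'})) \tensor \rat_\ell \inject \End(V_\ell\pic^0(\calc_{s'}))$ then implies $\sigma = \pm 1$ as an endomorphism of $\pic^0(\calc_{s'})$.  By the strong form of the Torelli theorem, the only elements of $\aut(\calc_{s'})$ acting as $\pm 1$ on $\pic^0(\calc_{s'})$ are $\id$ and, in the hyperelliptic case, $\iota_{s'}$, contradicting the construction of $\sigma$.  The main obstacle is the finite-cover step: after base-changing to $S'$ to acquire a family automorphism, one must retain Zariski density of the $\rat_\ell$-monodromy in $\sp_{2g}$, which is precisely what Lemma~\ref{lemintegmono} combined with the connectedness of $\sp_{2g}$ provides.
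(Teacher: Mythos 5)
Your proof is essentially correct but takes a genuinely different route from the paper's. The paper invokes an $\ell$-adic Chebotarev/equidistribution theorem (citing Chai) to produce a single $\bar\ff$-point $s_{3K}$ whose endomorphism algebra $\End(\pic^0(\calc_{s_{3K}}))\tensor\rat$ is a number field $L$ of degree $2g$ inert at a well-chosen $\ell$; linear disjointness of $L$ from small cyclotomic fields then forces the torsion units of $\calo_L$ to be $\pm1$, and the rest follows. You instead argue by contradiction with the automorphism scheme: if every fiber had an extra automorphism, a component $S'$ of $\underline{\aut}(\calc_S/S)$ minus the identity and hyperelliptic sections would dominate $S$, supplying a fiberwise automorphism $\sigma$ of the family over $S'$; $\sigma$ commutes with the monodromy, Zariski density of that monodromy in $\sp_{2g}$ forces $\sigma$ to act as a scalar, symplectic compatibility pins it to $\pm1$, and Torelli turns that into $\id$ or the hyperelliptic involution. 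Both arguments are sound, but they have different strengths. Yours is more elementary (no Chebotarev), and in fact yields the stronger conclusion that a dense open substack of $S$ has only trivial/hyperelliptic automorphisms. The paper's route has the advantage that the very same Chebotarev output (a degree-$2g$ field as endomorphism algebra) is reused verbatim in the proof of Application~\ref{appabssimp} on absolutely simple Jacobians, which your centralizer argument does not directly deliver.

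One technical point you gloss over: you assert that $S'\ra S$ finite and dominant implies $\pi_1(S'_{\bar\ff})$ has finite-index image in $\pi_1(S_{\bar\ff})$. This is clear when the morphism is finite étale, but $S'$ is only open in a component of $\underline{\aut}$, hence only generically étale over $S$. To pass from the generic statement to $\pi_1$ of the whole spaces one should either shrink to a dense open of $S$ over which $S'\ra S$ is finite étale and invoke surjectivity of $\pi_1(U)\ra\pi_1(S)$, which uses that $S$ (equivalently $S_{3K}$) is normal --- harmless in the paper's applications where $S$ is a smooth component, but not literally part of the lemma's hypotheses --- or work directly with the decomposition groups at generic points. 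Either fix is routine, but as written the step needs a sentence of justification. Everything else in the argument (the centralizer of $\sp_{2g}$ in $\gl_{2g}$ being scalars, the constraint $c^2=1$ from preservation of the Weil pairing of the canonical polarization, and the precise form of the Torelli theorem distinguishing hyperelliptic from non-hyperelliptic curves) is correctly deployed.
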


\begin{proof}
Let $S_{3K} = S \cross_{\calm_g} \calm_{g,{3K}}$; by Lemmas
\ref{lemintegmono} and \ref{lem3kfibration}, 
$\mono^\geom_{\rat_\ell}(S_{3K}) \iso \sp_{2g}(\rat_\ell)$.  

Let $\ell$ be a prime which splits completely in all cyclotomic fields whose
  degree over $\rat$ is at most $2g$.  By the Chebotarev density theorem (see
  \cite[Cor.\ 4.3]{chailadic} for details), there exists an $s_{3K}\in
  S_{3K}(\bar\ff)$ such that $\End_{\bar\ff}(\pic^0(\calc)_{s_{3K}})\tensor\rat \iso
  L$, where $L$ is a number field of dimension $[L:\rat] = 2g$ which
  is inert at $\ell$.

  Since any automorphism of $\calc_{s_{3K},\bar\ff}$ has finite
  order, $\aut_{\bar\ff}(\calc_{s_{3K},\bar\ff})$ is contained in
  the torsion subgroup of $\calo_L\units$.  Since $L$ is linearly
  disjoint over $\rat$ from each cyclotomic field of degree at most
  $2g$, the torsion subgroup of $\calo_L\units$ is simply $\st{\pm
    1}$.  Now, $-1$ has no nontrivial fixed points on the Tate module
  $T_\ell(\pic^0(\calc_{s_{3K}}))$.  Therefore, if an automorphism
  $\iota \in \aut(\calc_{s_{3K}})$ acts as $-1$ on the Jacobian of
  $\calc_{s_{3K}}$, then the quotient of $\calc_{s_{3K}}$ by
  $\iota$ has genus zero, and $\iota$ is a hyperelliptic involution.
  For $s:=\psi_g(s_{3K}) \in S(\bar \ff)$, the group 
  $\aut_{\bar\ff}(\calc_{g,s})$ is either trivial or is generated by a
  hyperelliptic involution as well.
\end{proof}

\begin{application}
\label{apptrivaut}
  Suppose $g \ge 3$ and $0 \le f \le g$.  Then there exists
  an open dense substack $U \subset \calm^f_g$ such that for each $s \in
  U(\bar\ff)$, $\aut_{\bar\ff}(\calc_{g,s})$ is trivial.
\end{application}

\begin{proof}
  After a finite extension of the base field, one can assume that
  each irreducible component of $\calm_g^f$ is geometrically
  irreducible.  Let $S$ be one such component and recall that ${\rm dim}(S)=2g-3+f$ \cite[Thm.\ 2.3]{FVdG:complete}.
By Theorem \ref{Tprankmono}, $\mono^\geom_\ell(S) \iso \sp_{2g}(\integ/\ell)$
  for all $\ell \not = p$.  By Lemma \ref{lemtrivaut}, there is a
  nonempty (and thus open dense) substack $U_S' \subset S$ whose points
  correspond to curves of genus $g$ and $p$-rank $f$ whose
  automorphism group is either trivial or generated by a hyperelliptic
  involution.  
Every component of the $p$-rank stratum $\calh_g^f$ of the hyperelliptic locus has dimension $g-1+f$
by \cite[Thm.\ 1]{GP:05} for $p \geq 3$ and by \cite[Cor.\ 1.3]{PZ:artschprank} for $p=2$.
Thus the intersection of $U_S'$ with the hyperelliptic locus is a proper closed substack of $U'_S$. Let 
$U_S=U'_S-(U'_S \cap \calh_g^f)$, and let $U$ be the union of the stacks
$U_S$ over all irreducible components $S$; if $s\in U(\bar\ff)$, then
$\aut_{\bar\ff}(\calc_{g,s})$ is trivial.
\end{proof}

\begin{remark}
Application \ref{apptrivaut} can be proved for $\calm_g^f$ for all $g \geq 3$ and all $0 \le f \le g$ 
without monodromy techniques; see \cite[Thm.\ 1.1]{achterglasspries}. 
\end{remark}

\begin{remark}
  In special cases, there are results in the literature that have
  stronger information about the field of definition of a curve 
  with small automorphism group.  In \cite{poonennoextra1}, the author
  shows that for every $p$ and every $g \ge 3$, there is a curve of
  genus $g$ defined over $\ff_p$ with trivial automorphism group.  The
  $p$-ranks of these curves are not determined.  
\end{remark}

\begin{application}
\label{appabssimp}
Suppose $g \ge 3$ and $0 \leq f \leq g$.  Let $S$ be an irreducible component of
$\calm_g^f$.  Then there exists $s \in S(\bar\ff)$
such that the Jacobian of $\calc_s$ is absolutely simple.
\end{application}

\begin{proof} Possibly after a finite extension of $\ff$, one can assume
  that $S$ is geometrically irreducible.
By Theorem
\ref{Tprankmono}, $\mono^\geom_\ell(S) \iso \sp_{2g}(\integ/\ell)$ for all
$\ell \not = p$.  By the proof of Lemma \ref{lemtrivaut}, there exists
a point $s \in S(\bar\ff)$ such that
$\End_{\bar\ff}(\pic^0(\calc_{g,s}))\tensor\rat$ is a
field.  Then the Jacobian $\pic^0(\calc_{g,s})$ is absolutely simple.
\end{proof}

\begin{remark}
  In special cases, there are results in the literature that have
  stronger information about the field of definition of curves with
  absolutely simple Jacobians.  For example, in \cite{HZhu} the
  authors show that, for every prime $p$, if $g=2$ or $g=3$, then
  there exists a curve with genus $g$ and $p$-rank $g$ defined over
  $\ff_p$ whose Jacobian is absolutely simple.
\end{remark}

\subsection{Enumerative applications: class groups and zeta functions}

\label{subsecappcount}

This section contains enumerative results about curves of genus $g$ and $p$-rank $f$ 
that rely on $\integ/{\ell}$-monodromy groups.  
Recall that if $s \in \calm_g(\ff)$,
then $\pic^0(\calc_{g,s})(\ff)$ is isomorphic to the class group of the function field $\ff(\calc_{g,s})$.
The size of the class group is divisible by $\ell$ exactly when there is a point of order $\ell$ on the 
Jacobian.
Roughly speaking, Application \ref{appclass}
shows that among all curves over $\ff$ of specified genus and $p$-rank,
slightly more than $1/\ell$ of them have an $\ff$-rational point of
order $\ell$ on their Jacobian.

\begin{application}
\label{appclass}
Suppose $g \ge 3$, and $0 \leq f \leq g$, and $\ell$ is a prime distinct from $p$.  Let $\xi$ be the
image of $\abs \ff$ in $(\integ/\ell)\units$.  There exists a rational
function $\alpha_{g,\xi}(T)
\in \rat(T)$ such that the following holds:
there exists a constant $B= B(\calm_g^f,\ell)$ such that if
$\calm_g^f(\ff)\not =
\emptyset$, then 
\begin{equation} 
\label{eqclass}
\abs{\frac{\#\st{ s\in \calm_g^f(\ff) : \ell \text{ divides } \abs{\pic^0(\calc_{g,s})(\ff)}}}{\#
    \calm_g^f(\ff)}  - \alpha_{g,\xi}(\ell)} < \frac B{\sqrt q}.
\end{equation}
\end{application}

\begin{remark}   Suppose $\ell$ is odd.  One knows that $\alpha_{g,1}(\ell) =
\frac{\ell}{\ell^2-1} + \calo(1/\ell^3)$,
while 
$\alpha_{g,\xi}(\ell) = \oneover{\ell-1}+\calo(1/\ell^3)$ if $\xi \not
= 1$.  A formula for $\alpha_{g,1}(\ell)$ is given in
\cite{achtercl}.
\end{remark}

\begin{proof}[Proof of \ref{appclass}]
 Write $\calm$ for $\calm_g^f$.
  Let $\calm^\sm$ be the open dense locus where the reduced stack
  $\calm$ is smooth, and let $\caln$ be the union of all connected
  components $S$ of $\calm^\sm$ such that $S(\ff)\not = \emptyset$.
  Let $S$ be any such component.  
Since $S(\ff)\not = \emptyset$, $S$ is geometrically connected and smooth, and thus geometrically
  irreducible.  In particular, $S_{\bar\ff}$ is dense in an
  irreducible component of $\calm_{\bar\ff}$, and thus
  $\mono^\geom_{\ell'}(S) \iso \sp_{2g}(\integ/\ell')$ for all 
  $\ell' \not = p$.

  Let $S_{3K} = S\cross_{\calm_g} \calm_{g,{3K}}$.  Since the map
  $\psi_g: \calm_{g,{3K}} \ra \calm_g$ is a fibration with
  connected fibers, $S_{3K}$ is also connected.  Tricanonical
  structures exist Zariski-locally, so $S_{3K}(\ff)\not = \emptyset$.
  Finally, $\psi_g$ is formally smooth.  Taken together, this
  shows that $S_{3K}$ is geometrically irreducible and smooth.

  By Lemma \ref{lemtrivaut}, there is an open dense subscheme $U_{3K}$
  of $S_{3K}$ such that if $t \in U_{3K}$, then $\aut(\calc_{g,t})
  \iso \st{1}$.  The geometric monodromy group of $U_{3K}$
  is again $\sp_{2g}(\integ/\ell)$ (Lemma \ref{lem3kfibration}). An equidistribution
  theorem (\cite[9.7.13]{katzsarnak}; see also
  \cite[3.1]{achtercl}) shows that an estimate of the form \eqref{eqclass}
  holds (with error term of order $\calo(1/\sqrt q)$), where $\calm$ is replaced by $U_{3K}$.  Let $U =
  \psi_g(U_{3K})$; since $\psi_g$ is
  universally open, $U$ is open, 
  too.  The fiber over each $s\in U(\ff)$ consists of exactly
  $\abs{\gl_{N(g)}(\ff)}/\abs{\aut(\calc_{g,s}))} = \abs{\gl_{N(g)}(\ff)}$ points 
  \cite[10.6.8]{katzsarnak}, and if $\psi_g(t) = s$ then $\calc_{g,t}
  \iso \calc_{g,s}$.  Therefore, the proportion of elements $s \in
  U(\ff)$ for which $\ell$ divides $\abs{\pic^0(\calc_{g,s})(\ff)}$ is exactly
  the same as the analogous proportion of elements $t \in
  U_{3K}(\ff)$.

  Thus \eqref{eqclass} holds when $\calm$ is replaced by $U$.  For
  dimension reasons, there exists a constant $D$ such that $\#(S-
  U)(\ff)/\#S(\ff) < D/q$; therefore, \eqref{eqclass} holds for $S$.
  By invoking this argument for each of the finitely many irreducible
  components of $\caln$, and remembering that by construction
  $\caln(\ff) = \calm^\sm(\ff)$, one obtains \eqref{eqclass} for
  $\calm^\sm(\ff)$.  Finally, since there exists a constant $D'$ such
  that $\#(\calm - \calm^\sm)(\ff)/\#\calm(\ff) < D'/q$, this yields
  \eqref{eqclass}.
\end{proof}

If $C/\ff$ is a smooth projective curve of genus $g$, its zeta function has the form
$P_{C/\ff}(T)/(1-T)(1-qT)$, where $P_{C/\ff}(T) \in \integ[T]$ is a polynomial
of degree $2g$.  The principal polarization on the Jacobian of $C$
forces a symmetry among the roots of $P_{C/\ff}(T)$; the largest
possible Galois group for the splitting field over $\rat$ of
$P_{C/\ff}(T)$ is the Weyl group of $\sp_{2g}$ which is a group of
size $2^gg!$.

\begin{application}
\label{appzeta}
Suppose $g \ge 3$, and $0 \leq f \leq g$, and $p>2g+1$.  There exists a constant $\gamma=
\gamma(g)>0$ so that the 
following holds. 
There exists a constant $E = E(\calm_g^f)$ so that if $\calm_g^f(\ff)\not = \emptyset$,
then 
\begin{equation}
\label{eqzeta}
\frac{\# \st{ s \in \calm_g^f(\ff) : P_{\calc_{g,s}/\ff}(T)\text{ is reducible, or has
      splitting field with degree }< 2^gg!}}{\#\calm_g^f(\ff)} < E q^{-\gamma}.
\end{equation}
\end{application}

\begin{proof}
  The proof is similar to that of Application \ref{appclass}.  Again, write $\calm$ for $\calm_g^f$.
  let $S$ be an irreducible component of $\calm^\sm$ with $S(\ff)\not
  = \emptyset$, and let $U_{3K}$ be the open dense subscheme of
  $S_{3K}$ whose points correspond to curves of the specified type with
  tricanonical structure with trivial automorphism group.  By Lemma \ref{lem3kfibration}, and Theorem
  \ref{Tprankmono}, $\mono^\geom_\ell(U_{3K}) \iso \sp_{2g}(\integ/\ell)$
  if $\ell \not =p$.  By \cite[Thm. 6.1
  and Remark 3.2.(4)]{kowalskisieve}, there is a constant
  $E(S)$ so that \eqref{eqzeta} is valid for $U_{3K}$.  The argument
  used in Application \ref{appclass} shows the same result for each
  $S$, and thus for $\calm$.
\end{proof}

\thanks{The second author was partially supported by NSF grant DMS-07-01303.}

\bibliographystyle{abbrv} 
\bibliography{prm}

\end{document}